\documentclass[12pt]{amsart}
\topmargin0in
\textheight8.5in
\oddsidemargin0.2in
\evensidemargin0.2in
\textwidth6in
\advance\hoffset by -0.5 truecm
\usepackage{amssymb}
\usepackage{graphicx}
\usepackage{subfigure}
\newtheorem{Theorem}{Theorem}[section]
\newtheorem{Lemma}[Theorem]{Lemma}
\newtheorem{Corollary}[Theorem]{Corollary}

\def\V{\mbox{Var}}

\def\R\re
\def\V{\bf V}

\def \re{{\mathbb R}}

\def \0{\lambda_{0}}

\begin{document}
\title[Yamabe constants]{On the  Yamabe constants of $S^2 \times \re^3$ and $S^3 \times \re^2$}

\author[J. Petean]{Jimmy Petean}\thanks{J. Petean is supported
by grant 106923-F of CONACYT}
 \address{CIMAT  \\
          A.P. 402, 36000 \\
          Guanajuato. Gto. \\
          M\'exico \\
           and Departamento de Matem\'{a}ticas, FCEyN \\
          Universidad de Buenos Aires, Argentina (on leave).}
\email{jimmy@cimat.mx}

\author[J. Ruiz]{Juan Miguel Ruiz}\thanks{J. Ruiz is supported
by a postdoctoral grant from  CONACYT}
 \address{IMPA  \\
          Estrada Dona Castorina 110, CEP
          22460-320, Rio de Janeiro, Brazil.
          }
\email{ruiz@impa.br}

\subjclass{53C21}

\date{}


\begin{abstract}  We compare the isoperimetric profiles of $S^2 \times \re^3$ and 
of $S^3 \times \re^2$ with that of a round 5-sphere (of appropriate radius). Then we
use this comparison to obtain lower bounds for the Yamabe constants of
$S^2 \times \re^3$  and $S^3 \times \re^2$. Explicitly
we show that  $ Y(S^3 \times \re^2 , [g_0^3 +dx^2])  > (3 /4) Y(S^5 )$
and $Y(S^2 \times \re^3 ,[g_0^2 +dx^2]) > 0.63 Y(S^5 )$. 
We also obtain explicit lower bounds in higher dimensions and for
products of
Euclidean space with a closed manifold of positive Ricci curvature. 
The techniques are a more general version of
those used by the same authors in \cite{Ruiz} and the results are a complement to
the work developed by B. Ammann, M. Dahl and E. Humbert to obtain explicit
gap theorems for the Yamabe invariants in low dimensions. 

\end{abstract}

\maketitle

\section{Introduction} 
Given a conformal class  $[g]$ of Riemannian metrics on a closed manifold $M^n$ the 
{\it Yamabe constant} of $[g]$, $Y(M,[g])$,  is defined as

$$Y(M,[g])  = \inf_{h\in [g]} \ \frac{ \int_M  s_h \  dvol(h) }{ Vol(M,h)^{\frac{n-2}{n}}},$$

\noindent
where $s_h$ and $dvol(h)$ denote the scalar curvature and volume element of $h$
respectively. If we denote by  $p=p_n =2n/(n-2)$ and let $h=f^{p-2} g$ we can rewrite
the previous expression as 

$$Y(M,[g])  = \inf_{f\in C^{\infty} (M) } \ \frac{ \int_M  a_n  | \nabla f |^2 dvol(g) + \int_M s_g f^2 dvol(g)}{ (\int_M f^p dvol(g))^{2/p} },$$

\noindent
where $a_n = 4(n-1)/(n-2)$.

Then one defines the Yamabe invariant of $M$, $Y(M)$, as
the supremum of the Yamabe constants over the family of all conformal classes
of metrics on $M$.

By a local argument  T. Aubin showed in \cite{Aubin} that the Yamabe constant
of any conformal class of metrics on any $n$-dimensional manifold is bounded above
by $Y(S^n ,[g^n_0 ] )$, where by $g^n_0$ we will denote from now on the round metric of
sectional curvature one on $S^n$. It follows that $Y(S^n) =Y(S^n , [g^n_0 ])$ and for 
any $n$-dimensional manifold $M$, $Y(M) \leq Y(S^n )$. A closed manifold $M$ has 
positive Yamabe invariant if and only if it admits a metric of positive scalar curvature.
In this case $Y(M) \in (0,Y(S^n )]$. Computing the invariant when $0<Y(M) <Y(S^n )$
is particularly difficult and interesting. There are very few cases when this has been
accomplished \cite{Akutagawa2, Bray, Gursky, LeBrun}  and only recently there has
been some more general results obtaining estimates in this situation.

In  this article we will first concentrate in obtaining  lower bounds for  the 
Yamabe constants of $S^2 \times \re^3$ and $S^3 \times \re^2$. 
We point out that for a non-compact  manifold $(W^n ,g)$ of  positive scalar curvature
we define its Yamabe constant by

$$Y(W,g) = \inf_{f\in L_1^2 (W)} \frac{a_n  \int_W  | \nabla f |^2 dvol(g) + \int_W s_g f^2 dvol(g) }{ (\int_W f^p dvol(g))^{2/p} }
=  \inf_{f\in L_1^2 (W)}  Y_g (f).$$

\noindent
We will call $Y_g$ the Yamabe functional of $(W,g)$.

Computing or estimating the  Yamabe constants of the Riemannian products of spheres and Euclidean spaces 
is very important 
in the study of the Yamabe invariant. One main reason for this is that they play a fundamental role in
understanding the behavior of the invariant under surgery. For instance they appear explicitly in the
surgery formula in \cite{Ammann}. To obtain our lower bounds we will use the techniques we 
developed in \cite{Ruiz}. The principal motivation to consider the 
particular cases of $S^2 \times \re^3$ and $S^3 \times \re^2$
is the recent work by B. Ammann, M. Dahl and E. Humbert \cite{Ammann2, Ammann3, Ammann4}
 where the authors obtain an explicit gap
theorem: using the estimates in this paper they show in \cite{Ammann4} (among other things)
 that for any simply connected closed 5-manifold $M^5$,
$Y(M^5 ) \in (45.1 ,Y(S^5 )]$ (note that $Y(S^5 ) =78.997...$). 

Our estimates will be obtained using appropriate lower bounds on isoperimetric profiles. Let
us recall that for a Riemannian manifold $(M,g)$ of volume $V$ the isoperimetric function 
(or isoperimetric profile) of $(M,g)$ is the function $I_{(M,g)} : (0,V) \rightarrow (0,\infty )$ given
by 

$$I_{(M,g)} (t) = \inf \{ Vol( \partial U ) : Vol (U) = t \} .$$

The principal tool to obtain our lower bounds is the following theorem (a special case of which was
used in our previous article \cite{Ruiz}):

\begin{Theorem} Let $(M^k ,g)$ be a closed Riemannian manifold with scalar curvature $s_g \geq k(k-1)$.
If $I_{(M^k \times \re^n , g +dx^2 )}$ is a non-decreasing function and $I_{(M^k \times \re^n , g +dx^2 )}
 \geq \lambda I_{(S^{n+k} , \mu g_0^{n+k} )}$ then
  $Y(M^k \times \re^n , [g +dx^2 ]) \geq \min \{ \frac{\mu k(k-1)}{(k+n)(k+n-1) } , \lambda^2 \} \ Y(S^{n+k} )$.
 \end{Theorem}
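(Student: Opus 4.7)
The plan is to show that for every nontrivial $f \in L_1^2(M^k \times \re^n)$,
\[
Y_g(f) \geq \min\left\{ \frac{\mu k(k-1)}{(k+n)(k+n-1)}, \, \lambda^2 \right\} Y(S^{n+k}),
\]
via a P\'olya--Szeg\H o type symmetrization: to $f$ one associates a radial function $u$ on $(S^{n+k}, \mu g_0^{n+k})$ with the same distribution function, and then compares the Yamabe numerators using the isoperimetric hypothesis.

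Concretely, for smooth nonnegative $f$ set $V(t) = \Vol(\{f > t\})$ and let $w$ denote its inverse, so $w \colon (0, V_0) \to [0,\infty)$ is decreasing with $V_0 = V(0)$. The co-area formula combined with Cauchy--Schwarz applied to the pair $(|\nabla f|^{1/2}, |\nabla f|^{-1/2})$, together with the isoperimetric bound $\Vol(\{f=t\}) \geq I_{M \times \re^n}(V(t))$ and the identity $-V'(t) = \int_{\{f=t\}} |\nabla f|^{-1} \, dA$, yields after the substitution $v = V(t)$
\[
\int_{M \times \re^n} |\nabla f|^2 \geq \int_0^{V_0} I_{M \times \re^n}(v)^2 \, w'(v)^2 \, dv,
\]
while the layer-cake formula gives $\int f^p = \int_0^{V_0} w^p \, dv$ and $\int f^2 = \int_0^{V_0} w^2 \, dv$. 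The scalar curvature bound gives $\int s_g f^2 \geq k(k-1) \int_0^{V_0} w^2 \, dv$ and the profile comparison contributes a factor $\lambda^2$ on the gradient term. Since the scalar curvature of $\mu g_0^{n+k}$ equals $(k+n)(k+n-1)/\mu$, writing $k(k-1) = C \cdot (k+n)(k+n-1)/\mu$ with $C = \mu k(k-1)/((k+n)(k+n-1))$ lets us factor out the minimum: the numerator of $Y_g(f)$ is at least $\min(\lambda^2, C)$ times
\[
a_{n+k} \int_0^{V_0} I_{(S^{n+k}, \mu g_0^{n+k})}(v)^2 \, w'(v)^2 \, dv + \frac{(k+n)(k+n-1)}{\mu} \int_0^{V_0} w^2 \, dv.
\]

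If $V_0 \leq \Vol(S^{n+k}, \mu g_0^{n+k})$, one can build a radial $u$ on $(S^{n+k}, \mu g_0^{n+k})$ centered at a pole whose inverse distribution function is exactly $w$. Its level sets are geodesic spheres and hence isoperimetric, so the Cauchy--Schwarz step becomes an equality: the bracketed expression above is precisely the numerator of the Yamabe functional of $u$, while its denominator equals $(\int f^p)^{2/p}$ by construction. The quotient is therefore $\geq Y(S^{n+k})$, completing the bound.

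The principal technical obstacle is the case $V_0 > \Vol(S^{n+k}, \mu g_0^{n+k})$, where the symmetric $u$ cannot live on a single round sphere. The non-decreasing hypothesis on $I_{M \times \re^n}$ enters precisely here: together with the symmetry of $I_{(S^{n+k}, \mu g_0^{n+k})}$ about half the total volume, it extends the profile comparison $I_{M \times \re^n}(v) \geq \lambda I_{(S^{n+k}, \mu g_0^{n+k})}(v)$ across the full interval $(0, \Vol(S^{n+k}, \mu g_0^{n+k}))$ and allows an approximation argument that restricts to test functions with controlled support. Carrying out this reduction cleanly, without eroding the factor $\min(\lambda^2, C)$, is where the real work of the proof lies.
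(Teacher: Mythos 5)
Your treatment of the small-support case ($V_0 \leq \Vol(S^{n+k}, \mu g_0^{n+k})$) is essentially the paper's: coarea plus Cauchy--Schwarz/H\"older, the isoperimetric bound on level sets, and separating out the constant $\min(\lambda^2, \mu k(k-1)/((k+n)(k+n-1)))$ by matching the scalar curvature of $\mu g_0^{n+k}$. That part is fine. But you have not actually proved the theorem: the large-support case is exactly where the hypothesis ``$I_{(M^k\times\re^n,g+dx^2)}$ non-decreasing'' does its work, and you do not carry it out---you only say that ``an approximation argument that restricts to test functions with controlled support'' should handle it. No such truncation argument is given, and it is not clear one can truncate $f$ without degrading the Yamabe quotient; the scaling of $\int f^p$ against $\int|\nabla f|^2 + \int s_g f^2$ under restriction of support is not favorable, and there is no obvious way to recover the loss in the limit.

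The paper's actual mechanism in the large-support case is a \emph{slab decomposition} of the range: pick $t_0 > t_1 \geq \cdots > t_N = 0$ so that each slab $f^{-1}(t_i, t_{i-1})$ has volume exactly $\Vol(S^{n+k},\mu g_0^{n+k})$ (the last one at most that), symmetrize each slab $f_i$ separately into a radial function ${f_i}_*$ on the sphere, and then combine using the elementary subadditivity $x^{2/p}+y^{2/p}\geq (x+y)^{2/p}$ in the denominator. The non-decreasing hypothesis enters because, for $t\in(t_i,t_{i-1})$, the relevant superlevel set is $\{f>t\}$ with $\Vol(\{f>t\})\geq\Vol(\{f_i>t\})$, so $\Vol(f^{-1}(t))\geq I_{M\times\re^n}(\Vol(\{f>t\}))\geq I_{M\times\re^n}(\Vol(\{f_i>t\}))\geq\lambda\,\Vol({f_i}_*^{-1}(t))$; without monotonicity of the profile the middle inequality fails. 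Your write-up never identifies this decomposition or the subadditivity step, so the core of the theorem is missing. (Also a small point of precision in your small-support case: the Cauchy--Schwarz step becomes an equality for the radial comparison function because $|\nabla u|$ is constant on its level sets; the fact that geodesic spheres are isoperimetric is what makes $\Vol(u^{-1}(t))$ equal the profile, which is a separate ingredient.)
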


It is not necessary that  $I_{(M^k \times \re^n , g +dx^2 )}$ is non-decreasing. One only needs
a reasonable lower bound for the isoperimetric function on large values of the volume 
 (after $I_{(S^{n+k} , \mu g_0^{n+k} )} $ attains its maximum).  For instance one could  ask
that  $I_{(M^k \times \re^n , g +dx^2 )} (t) $ is bounded below by the maximum of 
$\lambda I_{(S^{n+k} , \mu g_0^{n+k} )}$ for $t\geq (1/2) Vol (S^{n+k} , \mu g_0^{n+k} )$. But 
we are going to apply the theorem to non-compact manifolds of non-negative Ricci curvature (for
which the isoperimetric profile is non-decreasing by \cite[Page 52]{Bayle}) and this
seems a more natural condition.

To apply the previous result we obtain the following estimates for the isoperimetric profiles
of $(S^2 \times \re^3 ,g^2_0 +dx^2)$  and $(S^3 \times \re^2 ,g^3_0 +dx^2)$.

\begin{Theorem}  $I_{(S^2 \times \re^3 ,g^2_0 +dx^2)}  \geq \frac{3\sqrt{7}}{10} I_{(S^5 , \ (63/10)  g_0^5 )}$.

\end{Theorem}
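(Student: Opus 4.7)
The plan is to follow the slicing-rearrangement technique of the authors in \cite{Ruiz}, specialised here to $k=2$, $n=3$ with the constants tuned so that Theorem~1.1 delivers the sharpest possible Yamabe estimate. Note the numerical coincidence $(3\sqrt7/10)^2 = 63/100 = \mu\,k(k-1)/((k+n)(k+n-1))$ with $\mu=63/10$, which equalises the two quantities inside the minimum of Theorem~1.1 and yields exactly $Y(S^2\times\re^3,[g_0^2+dx^2])\geq 0.63\,Y(S^5)$. This forces the specific choice $\lambda=3\sqrt7/10$ and $\mu=63/10$: the proposition is not an isoperimetric accident, it is the pair that optimises Theorem~1.1.

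The right-hand side is explicit. Since $(S^5,\mu g_0^5)$ has constant sectional curvature $1/\mu$, its isoperimetric regions are geodesic balls, and the profile is the parametric curve
\[
t(\theta)=\mu^{5/2}\omega_4\int_0^\theta\sin^4 s\,ds,\qquad I(\theta)=\mu^2\omega_4\sin^4\theta,\qquad \theta\in[0,\pi],
\]
with $\omega_4=8\pi^2/3$. For the left-hand side, to a smooth finite-volume domain $U\subset S^2\times\re^3$ I would attach the fibre-volume function $v:\re^3\to[0,4\pi]$ defined by $v(x)=\mathrm{Area}(U\cap(S^2\times\{x\}))$. A coarea computation, combined with the isoperimetric inequality on $S^2$ applied slicewise, produces a lower bound on $\mathrm{Area}(\partial U)$ as the sum of a ``vertical'' term (integral of the slicewise perimeter on $S^2$) and a ``horizontal'' term controlled by $|\nabla v|$ on $\re^3$. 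Since $(S^2\times\re^3,g_0^2+dx^2)$ has nonnegative Ricci curvature, $v$ may be replaced by its radially decreasing rearrangement $v^*$ on $\re^3$ without worsening the bound, so the problem reduces to a one-dimensional optimisation over monotone radial profiles. A change of variables matching Euclidean ball volumes on $\re^3$ with spherical cap volumes on $(S^5,\mu g_0^5)$ then compares the optimisation directly with the sphere's parametric profile, and the constant $3\sqrt7/10$ emerges from matching at the equatorial configuration.

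The main obstacle is checking the inequality uniformly in $t\in(0,\mu^{5/2}\pi^3)$ rather than only at the extremes. In the small-volume regime both profiles resemble the Euclidean $\re^5$ profile and the strict inequality $\lambda<1$ gives slack. The delicate point is the matching volume, near which the tube $S^2\times B_r^3$ has the same volume as a hemisphere of $(S^5,\mu g_0^5)$: there the inequality is essentially tight and it is precisely this regime that forces $\mu=63/10$ and produces the constant $\lambda=3\sqrt7/10$. A calculus verification at the intermediate volumes, together with the monotonicity of $I_{(S^2\times\re^3,g_0^2+dx^2)}$ guaranteed by nonnegative Ricci, finishes the comparison.
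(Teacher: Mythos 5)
Your opening observation is correct and is indeed the invisible logic behind the choice of constants: with $k=2$, $n=3$, $\mu=63/10$ gives $\mu k(k-1)/((k+n)(k+n-1)) = 63/100$, and $\lambda = 3\sqrt{7}/10$ gives $\lambda^2 = 63/100$, so the two terms in the minimum of Theorem~1.1 balance exactly and produce the $0.63$ in Theorem~1.4. That part is fine.

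The proposed route, however, has a real gap, and it is a different route from the paper's. What you describe — slicing by the $\re^3$ factor, applying the isoperimetric inequality of $S^2$ on each slice, then controlling the horizontal term via the gradient of the fibre-volume function and rearranging — is precisely the argument underlying Morgan's product estimate, and what it yields is $I_{(S^2\times\re^3,\,g_0^2+dx^2)}(v) \geq I_P(v)/\sqrt{2}$, where $I_P$ is the product isoperimetric profile. The $1/\sqrt{2}$ cannot be avoided by that argument (it arises from combining the vertical and horizontal contributions of the coarea formula), and $I_P(v)/\sqrt{2}$ is \emph{not} $\geq \frac{3\sqrt{7}}{10}I_{(S^5,(63/10)g_0^5)}(v)$ for small $v$: near $v=0$ the best product regions are far from optimal, and the Morgan bound is too weak there. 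The paper uses Morgan's estimate only in the large-volume regime $v \geq 27$ (Lemma~4.3), where the $v^{2/3}$ asymptotics of the cylinder dominate. For small and intermediate volumes it does something genuinely different: it reduces dimension \emph{losslessly} via the Ros product theorem, deriving $I_{(S^2\times\re^3)} \geq I_{(S^3\times\re^2, 2(g_0^3+dx^2))} \geq 0.99\,I_{(S^4\times\re, 2^{5/3}(g_0^4+dx^2))}$ from $I_{(S^2\times\re)} \geq I_{(S^3, 2g_0^3)}$ and $I_{(S^3\times\re)} \geq 0.99\,I_{(S^4, 2^{2/3}g_0^4)}$ (Lemma~3.1), then compares $I_{(S^4\times\re,\cdot)}$ directly against $I_{(S^5,\cdot)}$ by a Bayle-monotonicity estimate for $v\leq 100$ plus a numerical check up to $v=427$ (Lemma~4.2), and finally glues the two regimes using the concavity of the profile. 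Your sketch makes no provision for the small-volume regime beyond asserting that "$\lambda < 1$ gives slack," which is a statement about the target inequality, not about what your argument produces.

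Two smaller points. First, the justification you offer for replacing $v$ by its radial rearrangement — "since $(S^2\times\re^3, g_0^2+dx^2)$ has nonnegative Ricci curvature" — is misplaced: the rearrangement step in the slicing argument is licensed by the Euclidean isoperimetric inequality on $\re^3$, not by Ricci curvature. Nonnegative Ricci (via Bayle) is what gives monotonicity and concavity of the profile, which the paper uses to reduce to a bounded range of $v$ and to interpolate linearly between two lower bounds. Second, your last paragraph defers to "a calculus verification at intermediate volumes" without identifying a mechanism; in the paper this verification is a concrete numerical comparison of explicit parametric formulas (Pedrosa's formulas for the cylinder profile versus the spherical cap profile), which is the actual work.

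In short: correct identification of the role of the constants, but the proposed slicing/Morgan route breaks down at small volumes, where the paper instead iterates the Ros product theorem; you would need to add that dimension-reduction chain (with its $0.99$-type factors and numerical checks) to have a complete argument.
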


\begin{Theorem}  $I_{(S^3 \times \re^2 ,g^3_0 +dx^2)}  \geq \frac{\sqrt{3}}{2} I_{(S^5 , \  (5/2)   g_0^5 )}$.

\end{Theorem}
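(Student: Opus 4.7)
\emph{Plan.} My approach is to establish the pointwise isoperimetric comparison by combining a symmetrization argument with a slice-wise application of the classical isoperimetric inequality on $S^3$, and then to carry out an explicit comparison with the computable profile of the round sphere $(S^5,(5/2)g_0^5)$. The specific constants $\sqrt{3}/2$ and $5/2$ are forced by Theorem 1.1: with $k=3$ and $n=2$, the two quantities in the minimum of Theorem 1.1 coincide precisely when $\mu\, k(k-1)/((n+k)(n+k-1)) = \lambda^2$, i.e.\ when $3\mu/10 = 3/4$, yielding $\mu = 5/2$ and $\lambda = \sqrt{3}/2$. Thus the target isoperimetric inequality is exactly the one that, via Theorem 1.1, will deliver $Y(S^3\times\re^2,[g_0^3+dx^2]) \geq (3/4)\,Y(S^5)$.

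Given a smooth region $\Omega \subset S^3 \times \re^2$ of volume $t$, I would first exploit the non-negative Ricci curvature of $(S^3\times \re^2,g_0^3+dx^2)$ together with its large isometry group $O(4)\times E(2)$ to reduce to a symmetric setting. Isoperimetric regions exist for every volume (\cite{Bayle}), and one may replace $\Omega$ by a region invariant under the rotational action on $\re^2$ and such that each slice $\Omega_x = \{y\in S^3 : (y,x)\in\Omega\}$ is a geodesic cap in $S^3$ (this P\'olya--Szeg\H{o}-type rearrangement is the delicate symmetrization step, but it follows the scheme used in \cite{Ruiz}). Setting $V(x) = \Vol_{S^3}(\Omega_x)$, the classical isoperimetric inequality on $S^3$ provides the slice-wise lower bound on the perimeter of $\Omega_x$, and a co-area decomposition of $\partial \Omega$ into vertical (slice-wise) and horizontal contributions reduces the original problem to a one-variable variational inequality for the radial profile $V(\rho)$ on $[0,\infty)$.

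With this reduction in place, the final step is an explicit comparison of the resulting one-variable functional against the profile of $(S^5,(5/2)g_0^5)$, which is given by the area of geodesic caps in a round $5$-sphere of radius $\sqrt{5/2}$. The choice $\mu=5/2$ should make the inequality sharp in some limiting regime, and $\sqrt{3}/2$ is then the optimal universal multiplicative constant. The main obstacle, as I see it, is this explicit one-variable comparison: one must verify the inequality uniformly over all admissible radial profiles $V(\rho)$ and over all total volumes $t$, controlling both the Euclidean-ball regime for small $t$ and the tube-like regime (where the optimal region in $S^3 \times \re^2$ is essentially $S^3 \times D_r$) for large $t$. The comparison should boil down to a single calculus inequality involving the explicit formulas for the volumes and areas of geodesic balls in $S^3$ and in $S^5$; verifying it uniformly, and confirming that $\sqrt{3}/2$ really is attainable (so that no further loss occurs), is where the bulk of the technical work will lie.
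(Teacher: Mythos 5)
Your identification of the constants is a nice observation and matches the paper's logic: with $k=3$, $n=2$ the two terms in the minimum of Theorem~1.1 balance precisely at $\mu=5/2$, $\lambda=\sqrt{3}/2$, and this is indeed why those constants appear. However, the proof strategy you outline is genuinely different from the paper's, and it contains a gap that I don't think you can close as stated.

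The critical step in your plan is the reduction of an arbitrary region $\Omega \subset S^3\times\re^2$ to one that is $O(2)$-invariant in the $\re^2$ factor \emph{and} has each $S^3$-slice a geodesic cap, followed by a one-variable comparison over radial profiles $V(\rho)$. This double symmetrization is not a routine P\'olya--Szeg\H{o} step: spherical symmetrization in the $S^3$ direction and disk symmetrization in the $\re^2$ direction do not obviously commute, the horizontal and vertical contributions to the perimeter couple, and no monotonicity of the full perimeter under the combined rearrangement is established. In fact, this is essentially the (open) problem of determining the isoperimetric regions of $S^3\times\re^2$, which \cite{Pedro} solves only for $S^n\times\re$, not for two or more flat factors. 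The ``scheme used in \cite{Ruiz}'' that you cite handles precisely the single-flat-factor cylinders where Pedrosa's classification applies, so it does not transfer.

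The paper sidesteps this entirely. It never attempts a direct isoperimetric analysis of $S^3\times\re^2$. Instead it chains: (i) compare $I_{(S^3\times\re)}$ to $0.99\,I_{(S^4,2^{2/3}g_0^4)}$ using Pedrosa's explicit formulas, Bayle's monotonicity of $I(v)/v^{(n-1)/n}$ under $\mathrm{Ric}\geq 0$, and numerics (Lemma~3.1); (ii) apply the Ros product theorem to promote this to $I_{(S^3\times\re^2)} \geq 0.99\, I_{(S^4\times\re,\,2^{2/3}(g_0^4+dx^2))}$ (Corollary~3.2), which is legitimate because the Ros theorem only needs the profile of one factor dominated by a model, not a symmetrization of the full product; (iii) compare $I_{(S^4\times\re)}$ to the $S^5$ profile for moderate volumes, again via Pedrosa/Bayle/numerics (Lemma~3.4); (iv) for large volumes, use Morgan's product inequality $I_{M\times N}\geq I_P/\sqrt{2}$ to get an explicit $\sqrt{v}$-type lower bound (Lemma~3.5); and (v) interpolate between these two regimes using the concavity of $I_{(S^3\times\re^2)}$ (Bayle again) and a straight-line comparison (Lemma~3.6). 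The upshot is that the paper's proof is purely a profile comparison driven by the solved cylinder case, the Ros and Morgan product theorems, Bayle's concavity, and substantial numerics; your proposed direct symmetrization would, if it could be justified, be a stronger and cleaner argument, but the justification is exactly what is missing and is not available in the cited sources.
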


Then we obtain as a corollary that:

\begin{Theorem}
$Y(S^2 \times \re^3 ,[g_0^2 + dx^2 ]) \geq 0.63 \ Y(S^5 )$ and $Y(S^3 \times \re^2 ,[g_0^3 + dx^2 ]) \geq 0.75 \ Y(S^5 ) .$
\end{Theorem}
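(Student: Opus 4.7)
The plan is to invoke the preceding three theorems together: Theorem 1.1 is precisely the engine that converts an isoperimetric comparison into a lower bound on the Yamabe constant, and Theorems 1.2 and 1.3 furnish exactly the isoperimetric comparisons needed for $S^2 \times \re^3$ and $S^3 \times \re^2$. The proof is therefore a short verification of hypotheses followed by an arithmetic check that the minimum appearing in the conclusion of Theorem 1.1 takes the advertised value.

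First I would verify the hypotheses of Theorem 1.1 in both cases. The spherical factor $(S^k, g_0^k)$ has constant scalar curvature $k(k-1)$, so $s_g \geq k(k-1)$ holds with equality. Since $(S^k, g_0^k)$ has positive Ricci curvature and $\re^n$ is flat, the product $(S^k \times \re^n, g_0^k + dx^2)$ has non-negative Ricci curvature, hence its isoperimetric profile is non-decreasing by \cite[Page 52]{Bayle}. The required comparison $I_{(M \times \re^n, g + dx^2)} \geq \lambda\, I_{(S^{n+k}, \mu g_0^{n+k})}$ is then supplied by Theorem 1.2 with $(k,n,\lambda,\mu) = (2, 3, 3\sqrt{7}/10, 63/10)$ and by Theorem 1.3 with $(k,n,\lambda,\mu) = (3, 2, \sqrt{3}/2, 5/2)$.

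With the hypotheses in place, the rest is computation. For $S^3 \times \re^2$ one has
$$\frac{\mu k(k-1)}{(k+n)(k+n-1)} = \frac{(5/2)(6)}{(5)(4)} = \frac{3}{4}, \qquad \lambda^2 = \frac{3}{4},$$
so Theorem 1.1 gives $Y(S^3 \times \re^2, [g_0^3 + dx^2]) \geq (3/4)\, Y(S^5)$. For $S^2 \times \re^3$ one has
$$\frac{\mu k(k-1)}{(k+n)(k+n-1)} = \frac{(63/10)(2)}{(5)(4)} = \frac{63}{100}, \qquad \lambda^2 = \frac{63}{100},$$
so Theorem 1.1 gives $Y(S^2 \times \re^3, [g_0^2 + dx^2]) \geq 0.63\, Y(S^5)$.

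There is no real obstacle at this stage, since the substantive analysis has been absorbed into Theorems 1.1--1.3. What does deserve emphasis is the balance between $\mu$ and $\lambda$: in each case the two quantities appearing in the minimum in Theorem 1.1 coincide exactly. This is no accident, but reflects that the constants $\mu$ in Theorems 1.2 and 1.3 are chosen to optimize the resulting lower bound, since decreasing $\mu$ lowers the first quantity in the minimum while enlarging $\mu$ typically forces $\lambda$ (and hence $\lambda^2$) down. The equality $\lambda^2 = \mu k(k-1)/((k+n)(k+n-1))$ is the signature of this optimal choice, and explains why the values $3/4$ and $0.63$ emerge as the final coefficients.
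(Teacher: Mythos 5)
Your proof is correct and follows exactly the paper's approach: Theorem 1.4 is deduced by plugging the isoperimetric comparisons of Theorems 1.2 and 1.3 into Theorem 1.1, verifying the scalar-curvature and monotonicity hypotheses, and evaluating the minimum. The arithmetic checks out, and your remark that $\mu$ is tuned so the two quantities in the minimum coincide is a correct reading of the optimization implicit in the paper's choice of constants.
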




The previous theorems also give lower bounds for the Yamabe invariants of certain 
products of manifolds.
For any Riemannian manifold $(M^k ,g)$ and  any n-dimensional closed manifold
of positive scalar curvature $(N^n , h)$ it is proven in \cite[Theorem 1.1]{Akutagawa} that

$$\lim_{r\rightarrow \infty} Y(N^n \times M^k, [h + rg] )=Y(N^n \times \re^k , [h +dx^2 ]) .$$

Therefore we also obtain as a corollary that

\begin{Theorem} If $M$ is a  closed 3-dimensional manifold then $Y(S^2 \times M)\geq
0.63  \ Y(S^5 )$ and if $S$ is any closed 2-manifold then $Y(S^3 \times S)\geq 0.75 \ Y(S^5 )$.
\end{Theorem}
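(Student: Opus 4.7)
The plan is to deduce this statement essentially immediately from Theorem 1.5 combined with the Akutagawa limit formula quoted just before the statement. The only subtle point is bookkeeping: $Y(S^2 \times M)$ denotes the Yamabe \emph{invariant} of the closed manifold $S^2\times M$, i.e.\ the supremum of $Y(S^2\times M,[\cdot])$ over all conformal classes, so it suffices to exhibit one family of conformal classes whose Yamabe constants are (in the limit) at least $0.63\, Y(S^5)$.

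First I would fix any Riemannian metric $g$ on the closed 3-manifold $M$. For every $r>0$ the product metric $g_0^2 + rg$ defines a conformal class on the closed manifold $S^2\times M$, hence by definition of the Yamabe invariant
\[
Y(S^2\times M) \;\geq\; Y\bigl(S^2\times M,\, [g_0^2 + rg]\bigr)
\]
for every $r>0$. Letting $r\to\infty$ and applying \cite[Theorem 1.1]{Akutagawa} with $(N^n,h)=(S^2,g_0^2)$ (which has positive scalar curvature, as required) and $(M^k,g)$ the chosen 3-manifold, one obtains
\[
Y(S^2\times M) \;\geq\; \lim_{r\to\infty} Y\bigl(S^2\times M,\,[g_0^2+rg]\bigr)
\;=\; Y\bigl(S^2\times\re^3,\,[g_0^2+dx^2]\bigr).
\]
Plugging in the lower bound of Theorem 1.5 then yields $Y(S^2\times M)\geq 0.63\, Y(S^5)$, which is the first claim.

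The second claim is symmetric: take any metric $h$ on the closed 2-manifold $S$, consider the family of conformal classes $[g_0^3 + rh]$ on $S^3\times S$, invoke the same limit result (this time with $(N,h)=(S^3,g_0^3)$ playing the role of the positive scalar curvature factor), and appeal to the second estimate of Theorem 1.5 to conclude $Y(S^3\times S)\geq 0.75\, Y(S^5)$.

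There is really no significant obstacle here; the hard work has already been done in Theorems 1.2--1.5. The only thing one should check carefully is the applicability of the Akutagawa formula: it needs the factor $(N,h)$ to be a closed manifold of positive scalar curvature (satisfied by $S^2$ and $S^3$ with their round metrics) and it needs $(M,g)$ to be a Riemannian manifold of the appropriate dimension (here closed, to ensure $S^2\times M$ and $S^3\times S$ are themselves closed so that the Yamabe invariant is defined in the usual way). Both conditions hold, so the argument goes through without modification.
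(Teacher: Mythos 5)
Your argument is correct and is precisely the one the paper intends: $Y(S^2\times M)$ is a supremum over conformal classes, so it dominates $\lim_{r\to\infty}Y(S^2\times M,[g_0^2+rg])$, which by the Akutagawa--Florit--Petean limit equals $Y(S^2\times\re^3,[g_0^2+dx^2])\geq 0.63\,Y(S^5)$ (and symmetrically for $S^3\times S$). The only slip is a labeling one: the explicit bounds you invoke are those of Theorem~1.4, not ``Theorem~1.5,'' which is the statement itself.
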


In Section 5 we will also find explicit lower bounds for $Y(S^7 \times \re^2 , g_0^7 + dx^2 ) $ and
$Y(S^8 \times \re^2 , g_0^8 + dx^2 ) $. These are needed to obtain the explicit lower bounds for the
Yamabe constants of compact spin manifolds in dimensions 9 and 10 in \cite[Corollary 5.4]{Ammann4}.
In this case we will simplify a little the calculations, at the expense of not getting the best possible 
lower bounds. We do so in order to avoid an excessive number of calculations. We obtain:

\begin{Theorem}
$Y(S^7 \times \re^2 ,[g_0^7 + dx^2 ]) \geq 0.747 \ Y(S^9 )$ and $Y(S^8 \times \re^2 ,[g_0^8 + dx^2 ]) \geq 0.626 \ Y(S^{10} ) .$
\end{Theorem}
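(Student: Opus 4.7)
The plan is to deduce Theorem 1.7 by combining Theorem 1.1 with an explicit isoperimetric estimate of the form
$I_{(S^k \times \re^2,\, g_0^k + dx^2)} \ge \lambda\, I_{(S^{k+2},\, \mu\, g_0^{k+2})}$
for $k=7,8$, with suitably chosen constants $\mu$ and $\lambda$. The scalar curvature of $g_0^k$ equals $k(k-1)$ as required, and since $g_0^k + dx^2$ has non-negative Ricci curvature, the isoperimetric profile of the product is non-decreasing by \cite[Page 52]{Bayle}, so the hypotheses of Theorem 1.1 hold automatically.

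First, one selects $\mu$ and $\lambda$ so as to (nearly) balance the two terms in the minimum appearing in Theorem 1.1, namely $\mu\, k(k-1)/((k+1)(k+2))$ and $\lambda^2$. For $k=7$ the coefficient $k(k-1)/((k+1)(k+2)) = 7/12$, so the target constant $0.747$ forces $\mu \geq 12\cdot 0.747/7 \approx 1.28$ together with $\lambda^2 \ge 0.747$. For $k=8$ the coefficient is $28/45 \approx 0.622$, so a value of $\mu$ just above $1$ suffices together with $\lambda^2 \ge 0.626$. The authors mention explicitly that in this section the calculations are simplified at the expense of sharpness, so the specific values of $\mu$ and $\lambda$ are chosen for convenience rather than optimality; this is why the resulting constants $0.747$ and $0.626$ fall slightly short of the natural ratios $3/4$ and $28/45$ that perfect balancing would suggest.

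The technical core is the isoperimetric comparison itself, which follows the template of Sections 3--4. One parametrises a natural family of trial regions in $S^k \times \re^2$ (products of geodesic balls in $S^k$ with rotationally symmetric sets in $\re^2$, together with their rearrangements), lower-bounds the boundary volume as a function of the enclosed volume, and compares the resulting curve pointwise with the explicit spherical-cap profile of $(S^{k+2}, \mu g_0^{k+2})$. The simplification announced in the paper presumably consists of replacing a sharp pointwise bound in the intermediate-volume regime by a coarser monotone bound --- for example, by the maximum of $\lambda\, I_{(S^{k+2},\mu g_0^{k+2})}$ for $t$ beyond half of the comparison sphere's total volume, exactly as suggested in the remark following Theorem 1.1.

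The principal obstacle is thus establishing the isoperimetric inequality across all volumes, particularly in the transitional regime where the optimal regions in $S^k \times \re^2$ shift from small nearly-geodesic balls (of effective dimension $k+2$) to long tubes with $S^k$ cross-section. Once this inequality is secured, plugging the chosen $(\lambda,\mu)$ into Theorem 1.1 immediately yields $Y(S^k \times \re^2, [g_0^k + dx^2]) \ge \lambda^2\, Y(S^{k+2})$, delivering the stated bounds $0.747\, Y(S^9)$ and $0.626\, Y(S^{10})$.
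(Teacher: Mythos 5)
Your high-level framing is right: the theorem follows from Theorem~1.1 once one establishes $I_{(S^k \times \re^2,\, g_0^k + dx^2)} \ge \lambda\, I_{(S^{k+2},\, \mu\, g_0^{k+2})}$ with explicit $\lambda,\mu$, and the hypotheses of Theorem~1.1 are satisfied since $s_{g_0^k}=k(k-1)$ and the profile is non-decreasing by Bayle. But you leave the actual isoperimetric comparison --- which is the entire content of Section~5 --- as a black box, and the mechanism you sketch for it is not the one the paper uses and would be genuinely harder to carry out. You propose to attack $S^k\times\re^2$ directly via trial regions of the form (geodesic ball in $S^k$) $\times$ (disc in $\re^2$). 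That gives at best a Morgan-type lower bound $I_{(S^k\times\re^2)} \ge I_P/\sqrt{2}$ for the product profile, which costs a factor $1/\sqrt{2}\approx 0.707$ before you have even started and cannot by itself deliver $\lambda^2 = 0.747$.

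The paper instead never touches $S^k\times\re^2$ directly. It first compares a \emph{cylinder} $S^n\times\re$ (whose isoperimetric regions are completely classified by Pedrosa, with explicit area/volume formulas) to a round sphere $(S^{n+1}, 2^{2/n}g_0^{n+1})$, obtaining $I_{(S^n\times\re)} \ge \beta_n\, I_{(S^{n+1},2^{2/n}g_0^{n+1})}$ for $n=7,8,9$ with $\beta_7=0.94$, $\beta_8=0.92$, $\beta_9=0.86$ (Lemma~5.1, via Pedrosa's formulas plus Bayle's monotonicity and concavity plus numerics). It then chains two of these through the Ros product theorem: $I_{(S^n\times\re^2)} \ge \beta_n I_{(S^{n+1}\times\re,\,2^{2/n}(g_0^{n+1}+dx^2))} \ge \beta_n\beta_{n+1} I_{(S^{n+2},\,2^{2/n}2^{2/(n+1)}g_0^{n+2})}$. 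Thus $\lambda=\beta_n\beta_{n+1}$ and $\mu=2^{2/n+2/(n+1)}$, so for $k=7$: $\min\{42\cdot 2^{2/7+1/4}/72,\, (0.94\cdot 0.92)^2\}=\min\{0.845,0.747\}=0.747$, and for $k=8$: $\min\{56\cdot 2^{2/9+1/4}/90,\,(0.92\cdot 0.86)^2\}=\min\{0.863,0.626\}=0.626$. Note that your heuristic that $\mu$ is ``chosen for convenience'' and the constants ``fall slightly short of $3/4$ and $28/45$'' is off: $\mu$ is forced by the Ros product construction, and $0.626 > 28/45\approx 0.622$ precisely because $\mu>1$ makes the curvature term larger than $28/45$, leaving $\lambda^2$ as the binding constraint. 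The key idea you are missing is the two-step reduction through cylinders plus the Ros product theorem; without it you cannot recover the stated constants.
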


One could use the previous estimates to obtain results in more general situations.  For instance
for a Riemannian manifold $(M^k ,g)$ of positive Ricci curvature the Levy-Gromov isoperimetric inequality  
compares the isoperimetric profile of $(M,g)$ with that of the round 
$k$-sphere: if $Ricci(g) \geq (k-1)g$ and 
$V=Vol(M,g)$ then $I_{(M,g)} (t) \geq  (V/V_k ) I_{(S^k ,g_0^k )} ((V_k /V) t)$, where $V_k$ is the volume
of the round $k$-sphere. 

Then  applying the Ros product Theorem (see \cite[Theorem 22]{Ros} or
\cite[Section 3]{Morgan2}) we have (using the same simple arguments we will use in Corollary 3.2 in this article) 
that 

$$I_{(M \times \re^n ,g +dx^2  )} (t) \geq  (V/V_k ) I_{(S^k  \times \re^n ,g_0^k  +dx^2 )} ((V_k /V) t).$$


If $I_{(S^k \times \re^n , g_0^k +dx^2 )} \geq \lambda  I_{(S^{k+n} ,\mu g_0^{k+n} )}$ then we have 

$$I_{(M \times \re^n ,g +dx^2  )} (t) \geq  (V/V_k ) \lambda   I_{(S^{k+n} ,\mu g_0^{k+n} )}  ((V_k /V) t)$$

$$= (V/V_k ) \lambda  (V/V_k )^{(1-(k+n))/(k+n)}   I_{(S^{k+n} ,\mu (V / V_k )^{2/(n+k)}  g_0^{k+n} )}  (t) $$

$$=  \lambda  (V/V_k )^{1/(k+n)}   I_{(S^{k+n} ,\mu (V / V_k )^{2/(n+k)}  g_0^{k+n} )}  (t)$$

We deduce from Theorem 1.1 that:

\begin{Theorem} Let $(M^k ,g)$ be a closed Riemannian manifold with Ricci curvature $Ricci(g) \geq (k-1)g$ and
volume $V$. Assume that $I_{(S^k \times \re^n , g_0^k +dx^2 )} \geq \lambda  I_{(S^{k+n} ,\mu g_0^{k+n} )}$.
Then   $Y(M^k \times \re^n , [g +dx^2 ]) \geq \min \{ \frac{\mu (V / V_k )^{2/(k+n)} k(k-1)}{(k+n)(k+n-1) } , (\lambda (V/V_k )^{1/(k+n)} )^2 \} \ Y(S^{n+k} )$.
\end{Theorem}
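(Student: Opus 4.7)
The plan is to deduce Theorem 1.8 directly from Theorem 1.1 by assembling the chain of isoperimetric comparisons already sketched in the discussion preceding the statement. The route is: apply L\'evy--Gromov to $(M,g)$, use the Ros product theorem to pass to the product with $\re^n$, insert the hypothesis on $I_{(S^k\times\re^n, g_0^k+dx^2)}$, absorb the resulting time--dilation into a conformal rescaling of the target sphere, and finally invoke Theorem 1.1.

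First I would record that the hypothesis $Ricci(g)\geq(k-1)g$ feeds L\'evy--Gromov to give $I_{(M,g)}(t)\geq (V/V_k)\,I_{(S^k,g_0^k)}((V_k/V)t)$, and then the Ros product theorem (as cited in the excerpt) lifts this to
\[
I_{(M\times\re^n, g+dx^2)}(t)\ \geq\ (V/V_k)\, I_{(S^k\times\re^n, g_0^k+dx^2)}((V_k/V)t).
\]
Inserting the assumption $I_{(S^k\times\re^n, g_0^k+dx^2)}\geq \lambda\, I_{(S^{k+n}, \mu g_0^{k+n})}$ and rescaling the target metric (volumes scale by $\alpha^{(k+n)/2}$ and perimeters by $\alpha^{(k+n-1)/2}$ under $g\mapsto\alpha g$, with $\alpha=(V/V_k)^{2/(k+n)}$) produces exactly the three displayed lines immediately preceding the theorem, namely
\[
I_{(M\times\re^n, g+dx^2)}(t)\ \geq\ \lambda\,(V/V_k)^{1/(k+n)}\, I_{(S^{k+n},\,\mu(V/V_k)^{2/(k+n)}\,g_0^{k+n})}(t).
\]

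Second, I would verify the side hypotheses of Theorem 1.1 with $\lambda' := \lambda(V/V_k)^{1/(k+n)}$ and $\mu' := \mu(V/V_k)^{2/(k+n)}$. The scalar curvature bound $s_g\geq k(k-1)$ is automatic from the Ricci lower bound by tracing. Monotonicity of the isoperimetric profile of $M\times\re^n$ follows from \cite[Page 52]{Bayle}, because the Riemannian product carries non-negative Ricci curvature (the flat factor contributes zero). Applying Theorem 1.1 with $\lambda',\mu'$ then yields exactly the announced bound $\min\{\mu' k(k-1)/((k+n)(k+n-1)),\,(\lambda')^2\}\, Y(S^{n+k})$.

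The only genuine point of care, and the step most likely to go wrong, is the scaling bookkeeping in the middle: one must verify that the dilation needed to rewrite $I_{(S^{k+n},\mu g_0^{k+n})}((V_k/V)t)$ as a profile in the variable $t$ produces precisely the factor $(V/V_k)^{(1-(k+n))/(k+n)}$ on the perimeter side, so that the two powers of $V/V_k$ collapse to $(V/V_k)^{1/(k+n)}$, while the metric on the sphere is multiplied by exactly $(V/V_k)^{2/(k+n)}$. Once this algebraic identity is in hand, the theorem follows mechanically by invocation of Theorem 1.1.
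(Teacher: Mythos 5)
Your proposal matches the paper's argument exactly: L\'evy--Gromov plus the Ros product theorem to transfer the isoperimetric comparison from $(M,g)$ to $(M\times\re^n,g+dx^2)$, the same rescaling bookkeeping that converts $(V/V_k)\lambda I_{(S^{k+n},\mu g_0^{k+n})}((V_k/V)t)$ into $\lambda(V/V_k)^{1/(k+n)}I_{(S^{k+n},\mu(V/V_k)^{2/(k+n)}g_0^{k+n})}(t)$, and then Theorem 1.1 applied with $\lambda'=\lambda(V/V_k)^{1/(k+n)}$ and $\mu'=\mu(V/V_k)^{2/(k+n)}$. Your added verification of the side hypotheses of Theorem 1.1 (tracing the Ricci bound to get $s_g\geq k(k-1)$, and using Bayle's monotonicity under non-negative Ricci) is correct and closes the argument cleanly.
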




{\bf Example:} Consider $({\bf HP}^2 ,g)$ where $g$ is the  usual
Einstein metric normalized to have scalar curvature 56. Then 
its volume is (see the computations in \cite[Appendix C]{Ammann4})
$V= V_8 \times (2^8 /7^3 ) \approx V_8 \times  0.746$. We will prove
in Section 5 (Corollary 5.2) that 
$I_{(S^8 \times \re^2 ,g_0^8 +dx^2)} \geq  0.92 \times 0.86\ I_{(S^{10},( 2^{2/8})(2^{2/9})( g_0^{10} ))}
=0.7912 \ I_{(S^{10},(1.387)( g_0^{10} ))}$. 

Then the previous theorem says that 

$$Y({\bf HP}^2 \times \re^2 , [g +dx^2 ])\geq (2^8 /7^3 )^{1/5} \min \left\{ \frac{1.387  \times 56}{90} , 0.7912^2  \right\} Y(S^{10} )>0.59 Y(S^{10} ).$$

\noindent \textbf{Acknowledgments.} The authors would like to thank
Bernd Ammann, Mathias Dahl and Emmanuel Humbert for motivating
discussions which guided the writing of this article. 
The second author would like to thank professor Luis Florit and IMPA for their hospitality.

\section{The isoperimetric profile of cylinders}

The isoperimetric profile of the  cylinders $(S^n\times \re ,g^n_0+dx^2)$, $n\geq 2$, are known. They have been
 studied by R. Pedrosa in \cite{Pedro}. 
Pedroza  shows that isoperimetric regions  are either a cylindrical section or congruent to a ball type region and gives explicit formulae for the volumes 
and areas of the (ball type) isoperimetric regions and their boundaries.  
The ball type regions $\Omega^n_h$ are balls whose boundary 
is a smooth sphere of constant mean curvature $h$. The sections of  $\Omega^n_h$, namely
$\Omega^n_h \cap (S^n \times \{ a \}) $, are geodesic balls in $S^n$ centered at some fixed point. If we let $\eta \in (0,\pi)$
be the maximum of  the radius of those balls then  $h=h_{n-1}(\eta)=\frac{(Sin(\eta))^{n-1}}{\int_0^{\eta}(Sin(s))^{n-1}ds}$.
These ball type regions are the isoperimetric regions for small values of the volume.
 The formulas for the volumes of  $\Omega_h$  and its boundary obtained by Pedroza are

                        \begin{equation}
                                        \label{area}
                                        A(\eta ) = Vol(\partial \Omega^n_h)= 2  V_{n-1} \int_0^{\eta} \frac{(Sin(y))^{n-1}}{\sqrt{1-u_{n-1}(\eta,y)^2}} dy,
                        \end{equation}

                        \begin{equation}
                                \label{volume}
                                V(\eta )= Vol(\Omega^n_h)= 2 V_{n-1} \int_0^{\eta} \frac{\int_0^y(Sin(s))^{n-1}ds \  \ u_{n-1}(\eta,y)}{\sqrt{1-u_{n-1}(\eta,y)^2}} dy,
                        \end{equation}

\noindent where  \[u_{n-1}(\eta,y)=\frac{(Sin(\eta))^{n-1}/\int_0^{\eta}(Sin(s))^{n-1}ds}{(Sin(y))^{n-1}/\int_0^y(Sin(s))^{n-1}ds}.\]

\section{Estimating the isoperimetric profile of $S^3 \times \re^2$}

In this section we will prove Theorem 1.3.
We will first deal with small values of the volume. Note that for any (closed or homogeneous)
Riemannian n-manifold $(M^n ,g)$ one has

$$\lim_{v\rightarrow 0} \frac{ I_{(M,g)} (v)}{v^{\frac{n-1}{n} }}= \gamma_n ,$$ 

\noindent
where $\gamma_n$ is the classical n-dimensional isoperimetric constant: 

$$\gamma_n
=\frac{Vol(S^{n-1} ,g_0^{n-1} )}{Vol (B^n (0,1), dx^2 )^{\frac{n-1}{n}}}.$$

In particular $\gamma_4 = 2^{7/4} \sqrt{\pi}$ and $\gamma_5 =(8 \pi^2 /3)^{1/5} 5^{4/5}$.

\begin{Lemma}
\label{S3}
 $I_{(S^3 \times \re,g_0^3 +dx^2)} \geq 0.99 \ I_{(S^4, 2^{2/3} g_0^4)}$. 
\end{Lemma}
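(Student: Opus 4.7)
The plan is to exploit Pedrosa's classification (Section 2) of the isoperimetric regions of $(S^3 \times \re, g_0^3 + dx^2)$: each minimizer is either a cylindrical section $S^3 \times [a,b]$, whose boundary has constant area $2V_3 = 4\pi^2$, or a ball-type region $\Omega_h^3$, parametrized by $\eta \in (0,\pi)$ with volume $V(\eta)$ and area $A(\eta)$ given by formulas (2) and (1) with $n=3$. Note that the choice of conformal factor $c = 2^{2/3}$ is calibrated precisely so that $c^{3/2} V_3 = 4\pi^2$; this is both the maximum of $I_{(S^4, c g_0^4)}$ (the area of the equator of the rescaled round sphere) and the constant boundary area of any cylindrical section of $S^3 \times \re$.

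First I would dispose of the regime in which the cylindrical section is the minimizer for $S^3 \times \re$. There $I_{(S^3 \times \re, g_0^3 + dx^2)}(v) = 4\pi^2$, while the right-hand side is bounded above by its maximum $4\pi^2$, so the ratio is at least $1 > 0.99$ automatically. For the other endpoint, the small-volume asymptotic $I(v) \sim \ga_4\, v^{3/4}$ holds on both sides (both manifolds being $4$-dimensional), so the ratio tends to $1$ as $v \to 0^+$.

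This leaves the ball-type regime at intermediate $v$, which is the substance of the lemma. Here I would parametrize the two profiles: $S^3 \times \re$ by $\eta \mapsto (V(\eta), A(\eta))$ from Pedrosa's formulas, and $(S^4, c\, g_0^4)$ by geodesic-ball radius $\rho \mapsto \bigl(c^2 V_3 \int_0^\rho \sin^3 s\, ds,\; 4\pi^2 \sin^3 \rho\bigr)$. For each $\eta$ one solves for the radius $\rho(\eta)$ matching the volumes and then must verify
\[
A(\eta)\; \ge\; 0.99 \cdot 4\pi^2 \sin^3\rho(\eta).
\]
The main obstacle will be converting this into a rigorous numerical bound with the explicit constant $0.99$. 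The integrands in (1)--(2) have an integrable singularity at $y = \eta$ where $u_2(\eta,y) \to 1$, which I would first remove by an appropriate change of variable (for instance $u = u_2(\eta,y)$, or $y = \eta - \tau$ with a Taylor expansion of $u_2$ near $y=\eta$) before carrying out any numerical integration. With that in place, the inequality can be checked on a sufficiently fine partition of $\eta \in (0,\pi)$, using monotonicity or direct bounds on the integrands on each subinterval to absorb the discretization error with room to spare before the $0.99$ threshold.
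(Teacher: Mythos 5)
Your overall strategy is the same as the paper's: split according to Pedrosa's classification (ball-type vs.\ cylindrical sections), observe that the cylindrical regime is automatic because $2\,\mathrm{Vol}(S^3)=4\pi^2$ equals the maximum of $I_{(S^4,2^{2/3}g_0^4)}$, and handle the ball-type regime numerically. But there is a real gap in your treatment of small volumes. You say that since $I(v)\sim\gamma_4\,v^{3/4}$ on both sides, ``the ratio tends to $1$ as $v\to 0^+$.'' That is true but does not by itself establish the inequality near $0$: the ratio could tend to $1$ while dipping below $0.99$ at volumes too small for any finite partition to catch. A numerical check on a fine partition of $\eta\in(0,\pi)$ can only cover a compact set of volumes bounded away from $0$; it cannot by itself produce a uniform bound all the way down to $v=0$.

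The paper closes exactly this gap with Bayle's concavity result: since $(S^3\times\re,g_0^3+dx^2)$ and $(S^4,2^{2/3}g_0^4)$ both have non-negative Ricci curvature, both renormalized profiles $v\mapsto I(v)/v^{3/4}$ are \emph{non-increasing}. Therefore it suffices to check a single value: if $I_{(S^3\times\re)}(v_0)/v_0^{3/4}>0.99\,\gamma_4$ for some $v_0$ (the paper uses $v_0=0.03$), then for all $v\le v_0$,
\begin{equation*}
I_{(S^3\times\re,g_0^3+dx^2)}(v)\ \ge\ \frac{I_{(S^3\times\re,g_0^3+dx^2)}(v_0)}{v_0^{3/4}}\,v^{3/4}\ >\ 0.99\,\gamma_4\,v^{3/4}\ \ge\ 0.99\,I_{(S^4,2^{2/3}g_0^4)}(v),
\end{equation*}
where the last step uses that $I_{(S^4,2^{2/3}g_0^4)}(v)/v^{3/4}$ is also non-increasing and tends to $\gamma_4$. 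You should incorporate this monotonicity argument; without it, the small-volume end of the interval is not rigorously covered, and your proposal is incomplete. Your remaining remarks (the calibration $2^{2/3}\cdot 2\pi^2\cdot 2^{1/3}=4\pi^2$, the desingularization near $y=\eta$ before numerically evaluating Pedrosa's integrals) are sound and consistent with what the paper does by ``standard numerical computations.''
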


\begin{proof}

We first check the inequality for $v\leq 0.03$. Using formulas (\ref{area}) and (\ref{volume}), direct computation shows that 
$\frac{I_{(S^3 \times \re,g_0^3+dx^2)}(0.03)}{(0.03)^{3/4}} \approx  5.904 >5.902 \approx (0.99) \gamma_4
 =0.99 \lim_{v\rightarrow 0} \frac{I_{(S^4, 2^{2/3} g_0^4)}(v)}{v^{3/4}} $).

\noindent On the other hand, we know by a theorem of V. Bayle \cite[page 52]{Bayle} 
that both $\frac{I_{(S^4, 2^{2/3} g_0^4)}(v)}{v^{3/4}}$ and  $\frac{I_{(S^3 \times \re,g_0^3+dx^2)}(v)}{v^{3/4}}$ are decreasing 
(since both  $(S^4, 2^{2/3}g_0^4)$ and $(S^3\times \re,g_0^3+dx^2)$ 
have non-negative Ricci curvature). Then it follows that for  $0\leq v\leq 0.03$
$$I_{(S^3 \times \re,g_0^3+dt^2)}(v)  \geq  \frac{I_{(S^3 \times \re,g_0^3+dt^2)}(0.03)}{(0.03)^{3/4}} v^{3/4} >(0.99) \gamma_4 v^{3/4} \geq (0.99) I_{(S^4, 2^{2/3} g_0^4)}(v).$$ 

The inequality for $v\geq 0.03$, can be verified using standard numerical computations, based on formulas (\ref{area}) and (\ref{volume}). We provide the graphics  (fig. \ref{fig:S3xR}). Note that for $v\ \geq v_0 \approx 20.8576$ a cylindrical section $S^3 \times [a,b] $ of volume $v$ is isoperimetric in $(S^3 \times \re,g_0^3 +dx^2)$ and its boundary
has volume $4\pi^2 >0.99\  4\pi^2$ which is the maximum of  $0.99 \ I_{(S^4, 2^{2/3} g_0^4)}$. So one only needs to check the inequality for
$v\leq v_0$.

\begin{figure}[h!]

                \subfigure[$ v \geq 2$]{                
                                                  \includegraphics[scale=0.220]{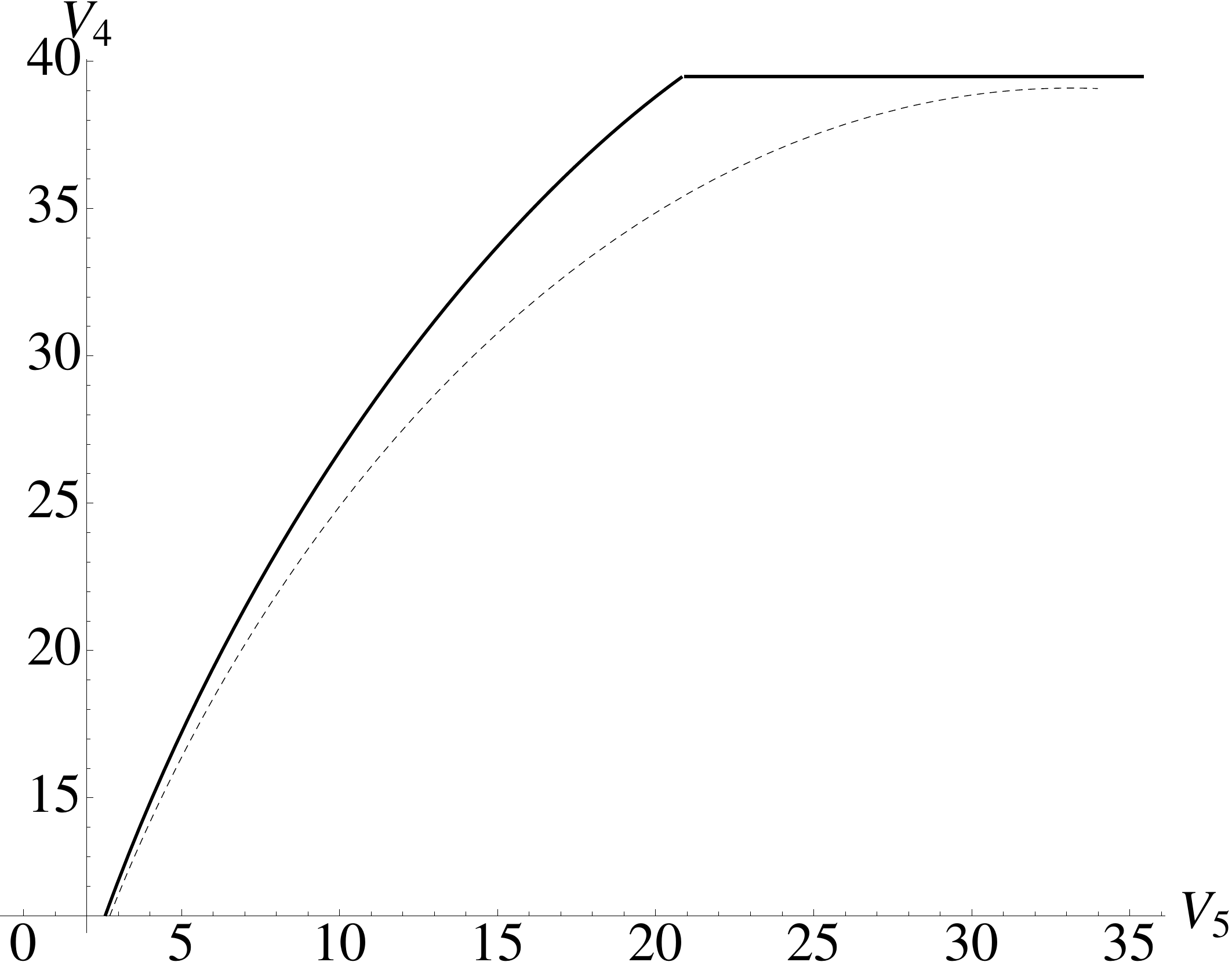}} 
                \subfigure[$ 0.3 \leq v \leq 2$.]{
                                                                  \includegraphics[scale=0.200]{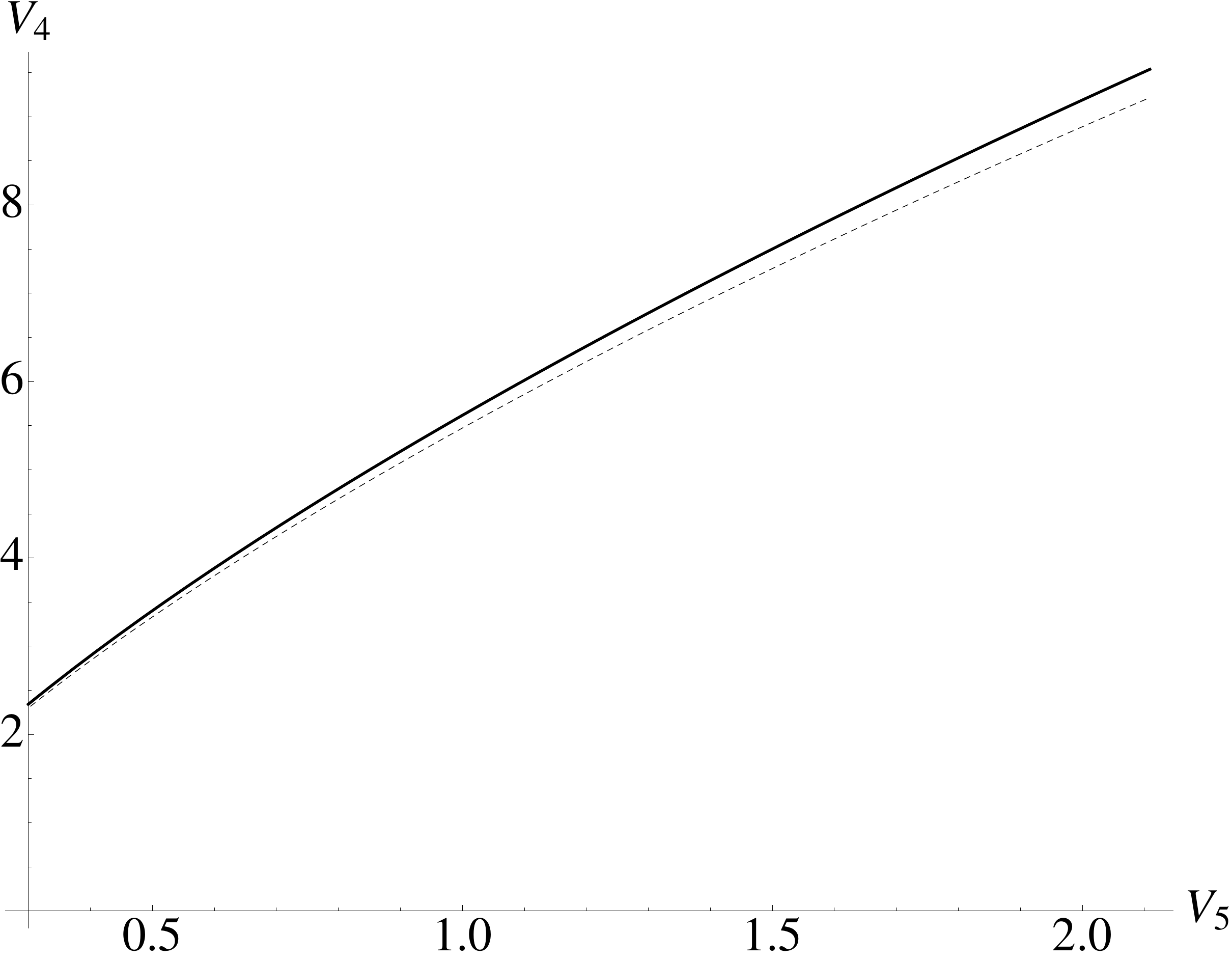}}          
\subfigure[$ 0.1 \leq v \leq 0.3$.]{            
                                                  \includegraphics[scale=0.200]{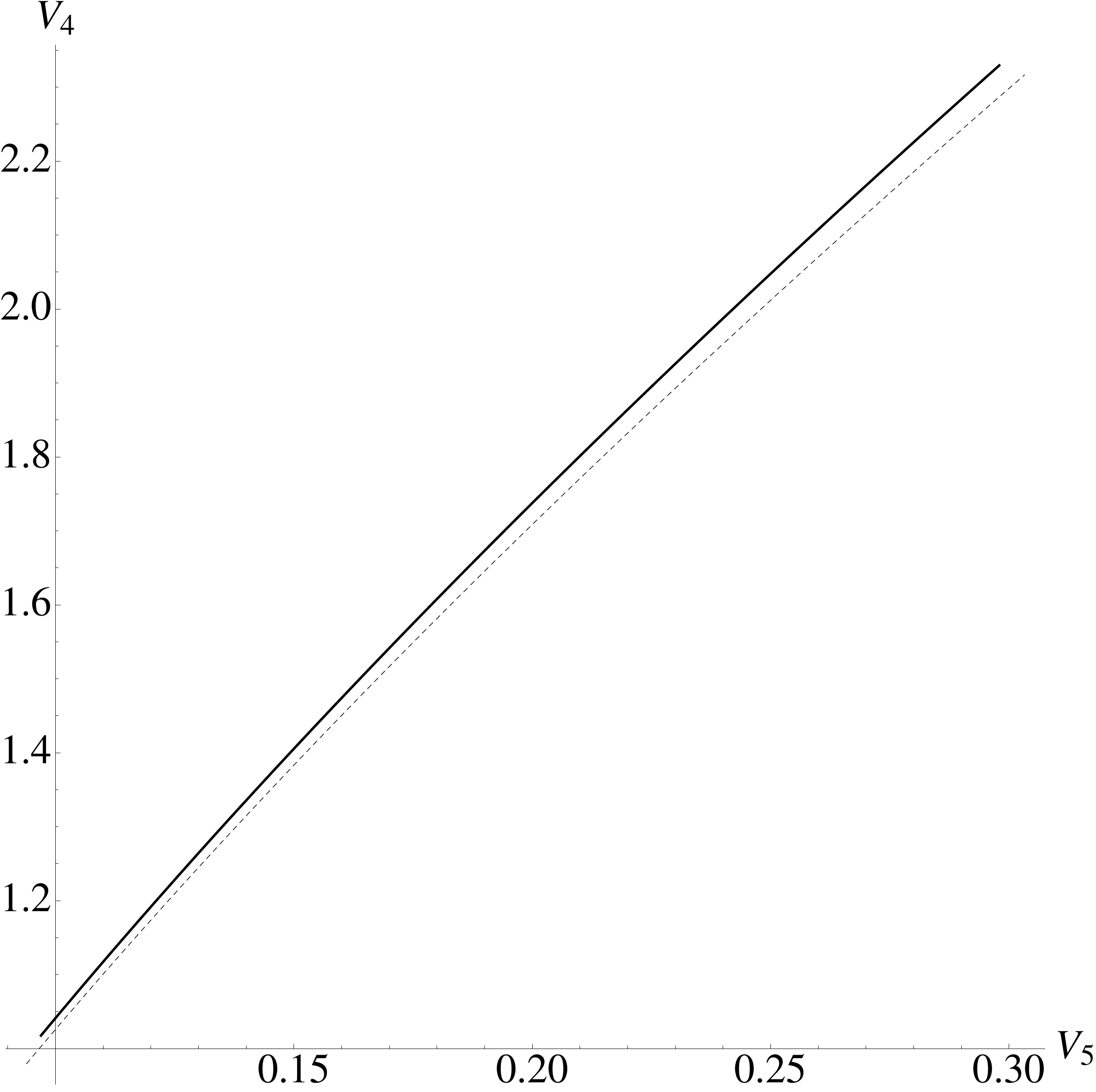}}
\subfigure[$ 0.03 \leq v \leq 0.1$.]{
                                                  \includegraphics[scale=0.200]{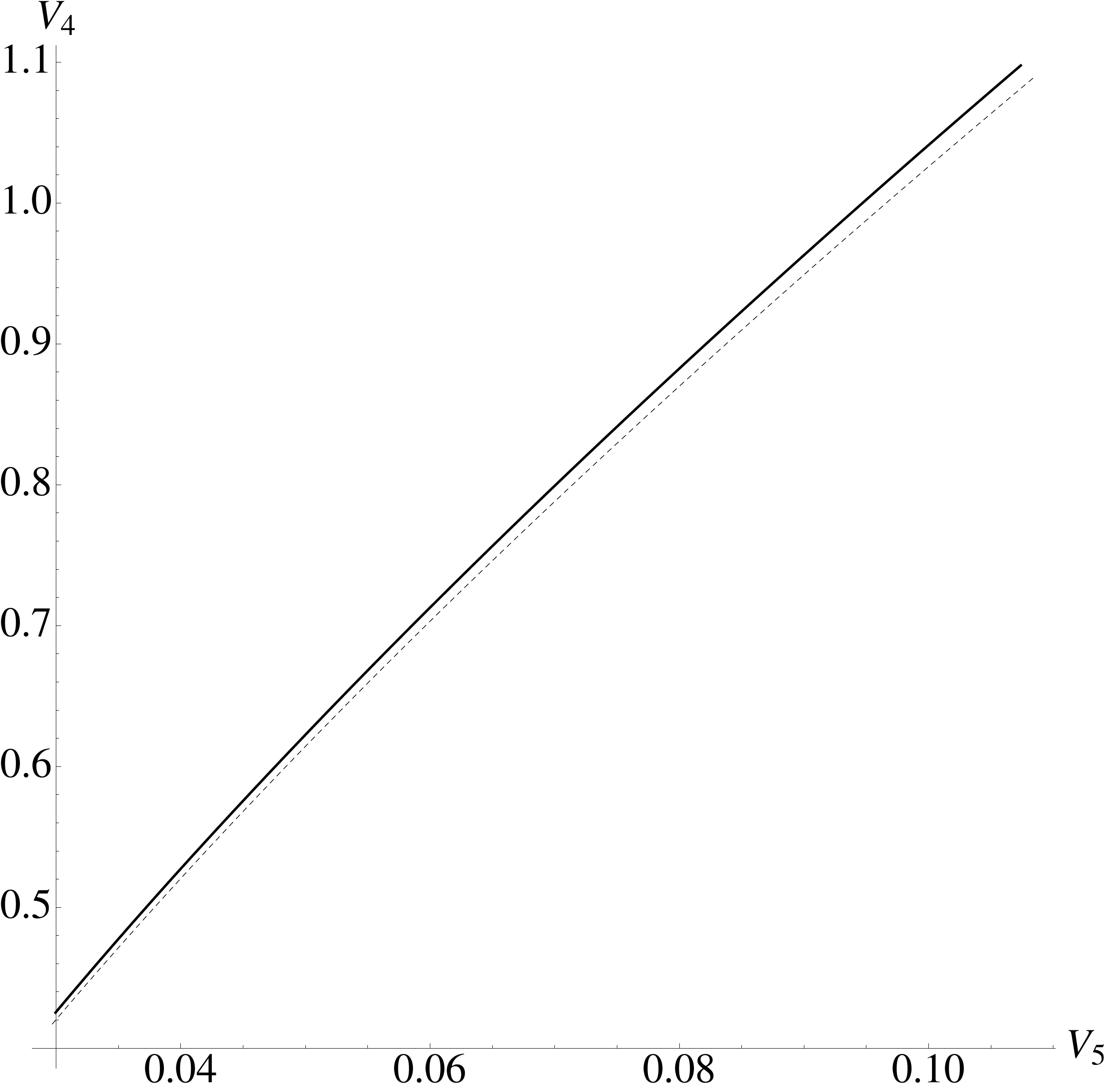}}

\caption{$I_{(S^3 \times \re,g_0^3+dt^2)}(v)\geq I_{(S^4, 2^{2/3} g_0^4)}(v)$, for $v\geq 0.03$.}
\label{fig:S3xR}
\end{figure}

\end{proof}

\begin{Corollary}
 $I_{(S^3 \times \re^2 ,g_0^3 +dx^2)} \geq 0.99 \ I_{(S^4 \times \re, 2^{2/3} g_0^4 + dx^2 )} = 0.99 \ I_{(S^4 \times \re, 2^{2/3}( g_0^4 + dx^2 ))}$. 
\end{Corollary}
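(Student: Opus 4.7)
The plan is to deduce this from Lemma \ref{S3} via the Ros product theorem \cite[Theorem 22]{Ros} (cf.\ \cite[Section 3]{Morgan2}), carrying the factor $0.99$ through the extra $\re$-factor by means of the elementary inequality $\sqrt{c^2 x^2 + y^2}\ge c\sqrt{x^2+y^2}$, valid for $0\le c\le 1$. This is the ``simple argument'' the introduction alludes to when passing from a profile inequality on $M$ (or on $M\times\re$) to one on $M\times\re^n$.

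Set $A=(S^3\times\re,\,g_0^3+dx^2)$ and $B=(S^4,\,2^{2/3}g_0^4)$; both have non-negative Ricci curvature, so $I_A^{4/3}$ and $I_B^{4/3}$ are concave by \cite[page 52]{Bayle}, which is the regularity underpinning the Ros-type slicing argument. Given a finite-perimeter $\Omega\subset A\times\re$ with $\Vol(\Omega)=v$, I would slice by the newly adjoined $\re$-factor: letting $\phi(t)=\Vol_4(\Omega\cap(A\times\{t\}))$, Fubini gives $\int_\re\phi=v$, and the coarea-type bound that underlies the Ros product theorem when one factor is $\re$ supplies
\[
\Vol_4(\partial\Omega)\ \ge\ \int_\re \sqrt{P_A(\Omega_t)^2+\phi'(t)^2}\,dt,
\]
where $P_A(\Omega_t)$ denotes the perimeter of the slice $\Omega_t$ inside $A$. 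Slice-by-slice, Lemma \ref{S3} gives $P_A(\Omega_t)\ge I_A(\phi(t))\ge 0.99\,I_B(\phi(t))$, and combined with $\sqrt{(0.99)^2 x^2+y^2}\ge 0.99\sqrt{x^2+y^2}$ this yields
\[
\Vol_4(\partial\Omega)\ \ge\ 0.99\int_\re\sqrt{I_B(\phi(t))^2+\phi'(t)^2}\,dt.
\]

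To identify the right-hand integral with a lower bound for $I_{B\times\re}(v)$, I would exhibit a concrete competitor in $B\times\re$: stacking, over each $t$, a concentric geodesic ball in $B$ of volume $\phi(t)$ produces a region of total volume $v$ whose perimeter equals precisely the displayed integral, so that integral is at least $I_{B\times\re}(v)$. Hence $\Vol_4(\partial\Omega)\ge 0.99\,I_{B\times\re}(v)$, and taking the infimum over $\Omega$ of volume $v$ proves $I_{A\times\re}\ge 0.99\,I_{B\times\re}$, the asserted inequality.

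The trailing equality $I_{(S^4\times\re,\,2^{2/3}g_0^4+dx^2)} = I_{(S^4\times\re,\,2^{2/3}(g_0^4+dx^2))}$ is immediate: the map $(p,x)\mapsto (p,\,2^{-1/3}x)$ is a Riemannian isometry between the two manifolds, so their isoperimetric profiles coincide. The step I expect to be most delicate is justifying the perimeter-slicing bound for an arbitrary finite-perimeter $\Omega$ (where the slices may be irregular) and verifying that the stacked-balls competitor achieves the claimed perimeter; both facts sit squarely within the Pedrosa--Ros--Bayle circle of ideas and rely on the non-negative Ricci hypothesis, but writing them out with full BV-theoretic rigor requires some care.
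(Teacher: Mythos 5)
Your proof is correct (modulo the BV-theoretic care you acknowledge at the end), but it takes a genuinely different route from the paper's. The paper simply invokes the Ros product theorem as a black box: it observes that $0.99\,I_{(S^4,\,2^{2/3}g_0^4)}$ is itself the isoperimetric profile of a \emph{model} metric-measure space (rescale the distance on $(S^4,\,2^{2/3}g_0^4)$ by $1/0.99$, keeping the measure), then applies Ros with $M_1=\re$, $M_2=(S^3\times\re,\,g_0^3+dx^2)$ and that model $M_0$, and finally compares $I_{\re\times M_0}$ with $0.99\,I_{(S^4\times\re,\,2^{2/3}g_0^4+dx^2)}$ (a distance comparison the paper leaves implicit in the phrase ``clearly follows''). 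Your slicing-plus-stacking argument is, in effect, a self-contained proof of precisely the special case of Ros's theorem needed here, namely when the new factor is a line: the coarea/Fubini bound $\Vol(\partial\Omega)\ge\int_\re\sqrt{P_A(\Omega_t)^2+\phi'(t)^2}\,dt$, the slice-wise use of Lemma~3.1, and the stacked-balls competitor, which you correctly note has perimeter exactly $\int_\re\sqrt{I_B(\phi)^2+\phi'^2}\,dt$. Your handling of the constant $0.99$ via the elementary inequality $\sqrt{c^2x^2+y^2}\ge c\sqrt{x^2+y^2}$ (for $0\le c\le1$) is cleaner and more explicit than the paper's rescaled-model bookkeeping. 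The trade-off is clear: the paper's citation is short but hides both the scaling subtlety and the Ros machinery, whereas your version is longer but makes every ingredient visible, with the only remaining burden being the BV regularity of the slices $\Omega_t$ and of $\phi$ — facts that are standard in Steiner-symmetrization arguments and that you rightly flag as the delicate step.
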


\begin{proof} Ros product Theorem (see \cite[Theorem 22]{Ros} or \cite[Section 3]{Morgan2} says 
that if one has a {\it model} measure space (as the Euclidean spaces or the spheres of any
radius) $(M_0 ,\mu_0 )$ and any other measure spaces $(M_1 ,\mu_1 )$, $(M_2 ,\mu_2 )$ such that $I_2 \geq  I_0$ 
 then $I_{\mu_1\otimes \mu_2} \geq I_{\mu_1\otimes \mu_0}$. If $(M_0 ,\mu_0 )$ is a model measure with isoperimetric profile 
$I_0$ then  $\lambda I_0$ is also the isoperimetric profile of a model measure (obtained by changing the  distance
on $M_0$) for any positive $\lambda $.
The corollary then clearly follows from Ros product Theorem and the previous lemma.






\end{proof}

In the next section we will use the following

\begin{Corollary}  $I_{(S^3 \times \re^2 ,2( g_0^3 +dx^2))} \geq 0.99 \ I_{(S^4 \times \re, 2^{5/3} (g_0^4 + dx^2 ))}$. 
\end{Corollary}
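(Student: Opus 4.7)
The plan is to derive Corollary 3.3 directly from Corollary 3.2 by an elementary rescaling argument. The key observation is the following scaling identity for the isoperimetric profile: if $g' = c^2 g$ is a homothetic rescaling of a Riemannian metric on an $n$-manifold by a positive constant $c^2$, then volumes of open sets scale by $c^n$ and $(n-1)$-volumes of hypersurfaces scale by $c^{n-1}$, so
$$I_{(M,c^2 g)}(w) \;=\; c^{\,n-1}\, I_{(M,g)}\!\left(w/c^{\,n}\right).$$

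Now observe that both metrics appearing in Corollary 3.3 are obtained from the corresponding metrics in Corollary 3.2 by multiplying by the same constant factor $2$ (i.e., $c = \sqrt{2}$), and the two underlying manifolds $S^3\times \re^2$ and $S^4\times \re$ have the same dimension $n = 5$. Applying the scaling identity on each side yields
$$I_{(S^3\times \re^2,\,2(g_0^3+dx^2))}(w) \;=\; 4\, I_{(S^3\times \re^2,\,g_0^3+dx^2)}\!\left(w/2^{5/2}\right),$$
$$I_{(S^4\times \re,\,2^{5/3}(g_0^4+dx^2))}(w) \;=\; 4\, I_{(S^4\times \re,\,2^{2/3}(g_0^4+dx^2))}\!\left(w/2^{5/2}\right).$$
Applying Corollary 3.2 at the argument $w/2^{5/2}$ and multiplying both sides by $4$ gives the desired inequality, with the constant $0.99$ preserved because the two scaling factors cancel.

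Since this reduction is purely a consequence of homogeneity of the isoperimetric profile under rescaling, there is no real obstacle; the only point requiring (mild) care is to verify that both sides of the target inequality are rescaled by the \emph{same} factor, which is the case precisely because the two manifolds have the same dimension and both metrics have been multiplied by the same constant.
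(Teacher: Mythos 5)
Your proof is correct and is exactly the argument the paper has in mind: the paper states Corollary~3.3 without proof immediately after Corollary~3.2, treating it as a direct consequence of the homothety scaling $I_{(M,c^2g)}(w)=c^{\,n-1}I_{(M,g)}(w/c^{\,n})$, which (since both $S^3\times\re^2$ and $S^4\times\re$ are $5$-dimensional and both metrics are multiplied by the same factor $2$) rescales the two sides identically and preserves the constant $0.99$.
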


\begin{Lemma}
\label{S4}
 For $v\leq 80$, $I_{(S^4 \times \re,2^{2/3} g_0^4+dt^2)}(v) \geq \sqrt{\frac{3}{4} } (0.99)^{-1}   I_{(S^5, (5/2)  \ g_0^5)} (v)$.  
\end{Lemma}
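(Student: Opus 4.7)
The plan follows the pattern of the proof of Lemma~\ref{S3}. Both manifolds are $5$-dimensional and have non-negative Ricci curvature, so by Bayle's theorem \cite[Page~52]{Bayle} the functions $v \mapsto I(v)/v^{4/5}$ are non-increasing on each side, and the Euclidean isoperimetric asymptotic gives
$$\lim_{v\to 0^+} \frac{I(v)}{v^{4/5}} = \gamma_5$$
for both profiles. Since $\sqrt{3/4}\,(0.99)^{-1} \approx 0.875 < 1$, the desired inequality is strict for $v$ sufficiently small, and in particular a small-$v$ argument in the style of Lemma~\ref{S3} should handle an initial interval $(0, v_1]$.

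Concretely, the first step is to fix a small cutoff $v_1 > 0$ and verify, using the Pedrosa formulas (\ref{area}) and (\ref{volume}) adapted to the rescaled cylinder $(S^4 \times \re, 2^{2/3} g_0^4 + dt^2)$, the numerical estimate
$$\frac{I_{(S^4 \times \re,\, 2^{2/3} g_0^4 + dt^2)}(v_1)}{v_1^{4/5}} \geq \sqrt{3/4}\,(0.99)^{-1}\gamma_5.$$
Combining Bayle monotonicity on the left-hand side with the Bayle upper bound $I_{(S^5,(5/2)g_0^5)}(v)/v^{4/5} \leq \gamma_5$ on the right-hand side then propagates this inequality to all $v \in (0, v_1]$.

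For the remaining range $v_1 \leq v \leq 80$, both profiles admit explicit parametric descriptions. The left-hand side is given by the Pedrosa formulas with $n=4$, after the scaling correction for the metric factor $2^{2/3}$ on the $S^4$ factor (equivalently, replacing the unit $4$-sphere by the $4$-sphere of radius $2^{1/3}$ in the parametrization). The right-hand side is obtained from geodesic balls in $(S^5,(5/2)g_0^5)$, whose volume and boundary area are standard incomplete integrals of $\sin^4$. The inequality can then be checked graphically on this compact interval, in the same manner as Figure~\ref{fig:S3xR} in the proof of Lemma~\ref{S3}. The bound $v \leq 80$ is convenient because it keeps us away from the transition region to large volumes on the right-hand side (the equator of $(S^5,(5/2)g_0^5)$ has volume $(5/2)^{5/2}\pi^3/2 \approx 153$), so no special behavior at the maximum of $I_{(S^5,(5/2)g_0^5)}$ intervenes.

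The main obstacle is computational rather than conceptual: carefully tracking the $2^{2/3}$ rescaling through Pedrosa's parametrization, and checking whether ball-type regions remain isoperimetric on the left-hand side throughout $[v_1, 80]$. If the transition to cylindrical sections $S^4 \times [a,b]$ occurs below volume $80$, that subrange is handled separately using the fact that such a section has boundary area $2 \cdot 2^{4/3} V_4$ (two copies of the rescaled $4$-sphere), which is a constant easily compared against the right-hand side on the relevant range.
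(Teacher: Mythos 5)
Your proposal follows essentially the same route as the paper's proof: the paper uses a concrete cutoff $v_1 = 4$, verifies numerically that $I_{(S^4 \times \re,\,2^{2/3}g_0^4 + dt^2)}(4)/4^{4/5} \approx 6.26 > 6.10 \approx \frac{\sqrt{3}}{2}(0.99)^{-1}\gamma_5$, invokes Bayle monotonicity exactly as you describe to cover $(0,4]$, and then checks the inequality graphically on $[4,80]$. Your extra observations (that $80$ sits safely below the equatorial volume $\approx 153$ of $(S^5,(5/2)g_0^5)$, and that a transition to cylindrical sections on the left-hand side would only make the profile easier to bound by a constant) are sound and complementary, but the argument is the same in substance.
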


\begin{proof}

We begin by proving the inequality for $v\leq 4$. By direct computation, using formulas (\ref{area}) and (\ref{volume}), we get 
 $\frac{I_{(S^4 \times \re,2^{2/3} g_0^4+dt^2)}(4)}{(4)^{4/5}} \approx 6.2585> 6.0971\approx \frac{\sqrt{3}}{2}(0.99)^{-1} \gamma_5 =  \frac{\sqrt{3}}{2}(0.99)^{-1}
 \lim_{v\rightarrow 0}  \frac{I_{(S^5, \frac{5}{2}  g_0^5)}(v)}{v^{4/5}}$.

By the result of  Bayle mentioned above  \cite[page 52]{Bayle}, 
the functions   $\frac{I_{(S^5, \frac{5}{2}  g_0^5)}(v)}{v^{4/5}}$ and   $\frac{I_{(S^4 \times \re,2^{2/3} (g_0^4+dt^2))}(v)}{v^{4/5}}$ are decreasing.
Hence 

$$I_{(S^4 \times \re,2^{2/3} (g_0^4+dt^2))} (v)\geq  \frac{I_{(S^4 \times \re,2^{2/3} (g_0^4+dt^2))}(4)}{(4)^{4/5}} v^{4/5}>   \frac{\sqrt{3}}{2} (0.99)^{-1} \gamma_5 v^{4/5}$$
$$\geq  \frac{\sqrt{3}}{2} (0.99)^{-1} I_{(S^5, \frac{5}{2} g_0^5)} (v),$$ for  $0\leq v\leq 4$.

We now check the inequality  for $4 \leq v \leq 80$, using standard numerical computations, based on formulas (\ref{area}) and (\ref{volume}). We provide the graphics (fig. \ref{fig:S4xR1}).

\begin{figure}[h!]
                
                        \begin{center}
                                                  \includegraphics[scale=0.170]{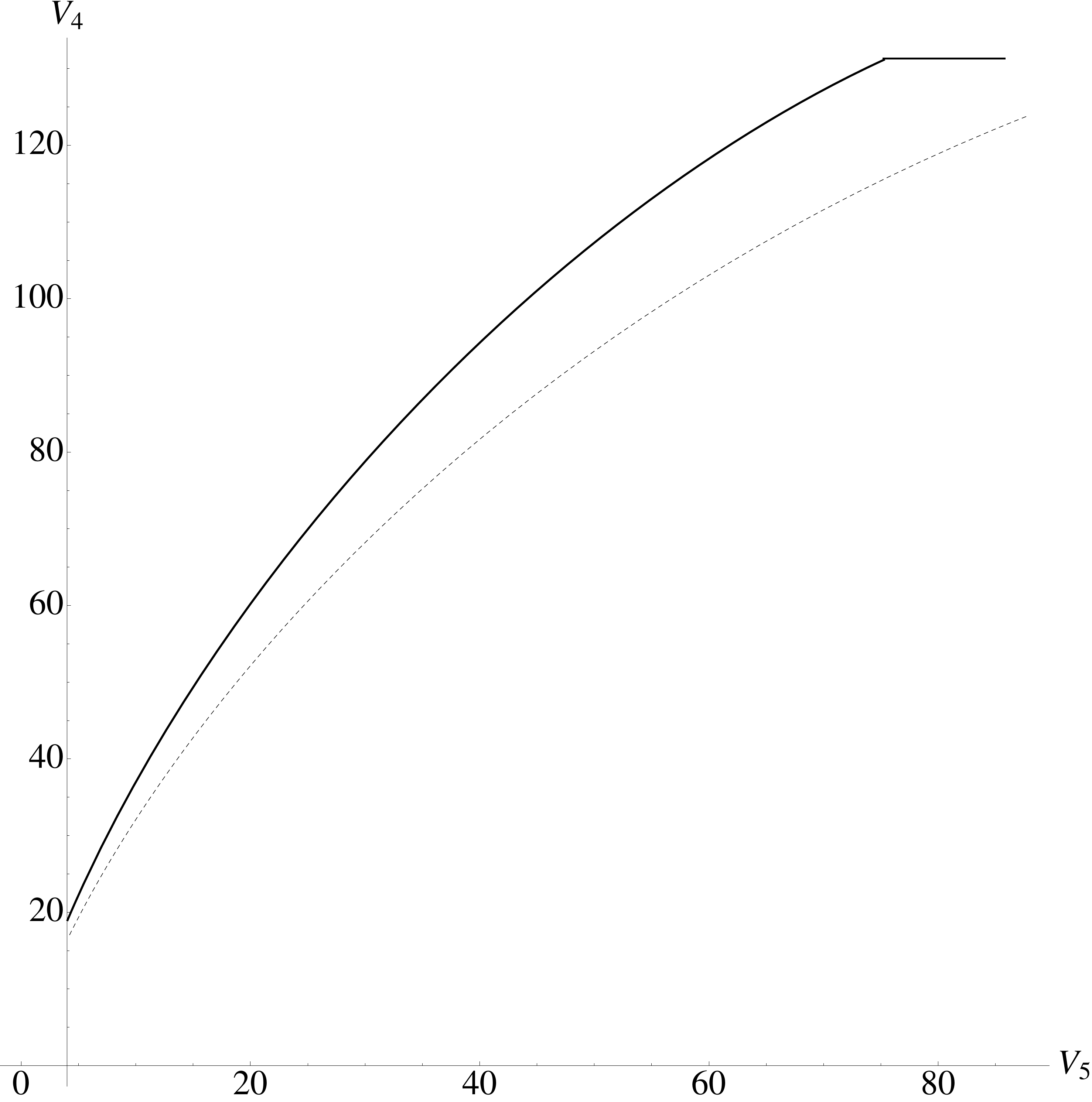}
                                \end{center}
                 \caption{$I_{(S^4 \times \re,2^{2/3} g_0^4+dt^2)}(v) \geq  \frac{\sqrt{3}}{2} (0.99)^{-1} I_{(S^5, \frac{5}{2} g_0^5)} (v)$, for $4\leq v\leq 80$.}
                 \label{fig:S4xR1}
\end{figure}

\end{proof}

\begin{Lemma}

 For $v\geq16$,  $ I_{(S^3\times \re^2,g_0^3+ dx^2)}(v) \geq \frac{(2 \pi)^{3/2}}{\sqrt{2}} \sqrt{v}$.
\end{Lemma}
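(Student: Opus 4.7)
My plan is to bound $|\partial\Omega|$ for any $\Omega\subset S^3\times\re^2$ of volume $v\geq 16$ by combining isoperimetric information from both factors via a coarea decomposition. Set $a(y) = \mathrm{Vol}_{S^3}(\Omega\cap(S^3\times\{y\}))\in[0,2\pi^2]$, so that $\int_{\re^2} a\,dy = v$. Decomposing the unit outward normal of $\partial\Omega$ as $n = n_{S^3}+n_{\re^2}$, the pointwise identity $|n_{S^3}|^2+|n_{\re^2}|^2=1$ together with Cauchy--Schwarz in $L^2(\partial\Omega)$ yields $|\partial\Omega|^2\geq P_{S^3}^2+P_{\re^2}^2$, where $P_\bullet := \int_{\partial\Omega}|n_\bullet|\,d\mathcal H^4$.

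For the first piece, the coarea formula applied to the projection $\pi_2\colon S^3\times\re^2\to\re^2$ restricted to $\partial\Omega$ shows $P_{S^3} = \int_{\re^2}|\partial\Omega_y|_{S^3}\,dy$ (the Jacobian of $\pi_2$ on the tangent space of $\partial\Omega$ is $|n_{S^3}|$), and the $S^3$-isoperimetric inequality then gives $P_{S^3}\geq\int_{\re^2}I_{S^3}(a(y))\,dy$. For the second piece, a direct computation of $\nabla a$ using the formula $\partial_{y_i}a(y)=-\int_{\partial\Omega_y}(n_{\re^2,i}/|n_{S^3}|)\,d\mathcal H^2$ combined with coarea shows $\|\nabla a\|_{L^1(\re^2)}\leq P_{\re^2}$; the sharp Euclidean Sobolev inequality $\|f\|_{L^2(\re^2)}\leq(2\sqrt\pi)^{-1}\|\nabla f\|_{L^1(\re^2)}$ applied to $f=a$ then gives $P_{\re^2}\geq 2\sqrt\pi\,\|a\|_{L^2(\re^2)}$.

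Assembling these estimates, the lemma is reduced to the scalar inequality
\[
4\pi\int_{\re^2} a^2\,dy + \left(\int_{\re^2}I_{S^3}(a(y))\,dy\right)^{\!2}\ \geq\ 4\pi^3 v
\]
for all measurable $a\colon\re^2\to[0,2\pi^2]$ with $\int a=v\geq 16$; indeed the right-hand side is exactly $\bigl((2\pi)^{3/2}/\sqrt 2\bigr)^2 v$. I would first verify this on ``rectangular'' profiles $a=c\cdot\mathbf 1_E$ (with $c|E|=v$), where it becomes the one-variable inequality $4\pi c + I_{S^3}(c)^2\,v/c^2\geq 4\pi^3$. Using the explicit parameterization $I_{S^3}(c)=4\pi\sin^2 r$ with $c=2\pi(r-\sin r\cos r)$, a direct calculus analysis shows the minimum of $c\mapsto 4\pi c+I_{S^3}(c)^2\cdot 16/c^2$ over $c\in(0,2\pi^2]$ is interior and numerically sits comfortably above $4\pi^3\approx 124$, the bound only improving as $v$ grows beyond $16$.

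The main obstacle is extending the scalar inequality from rectangular profiles to arbitrary $a$. The concavity of $I_{S^3}$ on $[0,2\pi^2]$ controls $\int I_{S^3}(a)$ well under averaging, but the convex term $\int a^2$ and the squared-concave term $(\int I_{S^3}(a))^2$ respond oppositely to equalizing $a$, so a careful layer-cake or Schwarz-symmetrization argument on the $\re^2$ factor is needed to reduce to the rectangular case. The numerical slack at $v=16$ suggests there is enough room for such a reduction to succeed, and the constraint $v\geq 16$ is precisely what makes the rectangular bound close without being saturated.
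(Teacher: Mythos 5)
Your normal decomposition $|\partial\Omega|^2\geq P_{S^3}^2+P_{\re^2}^2$, the slicing bound $P_{S^3}\geq\int I_{S^3}(a)\,dy$, and the BV--Sobolev bound $P_{\re^2}\geq 2\sqrt\pi\,\|a\|_{L^2}$ are all correct, and they do reduce the lemma to the scalar inequality
\[
4\pi\int_{\re^2}a^2\,dy+\Bigl(\int_{\re^2}I_{S^3}(a(y))\,dy\Bigr)^2\ \geq\ 4\pi^3 v
\]
for measurable $a\colon\re^2\to[0,2\pi^2]$ with $\int a=v\geq16$. But the step you defer --- passing from rectangular profiles $a=c\,\mathbf 1_E$ to arbitrary $a$ --- is a genuine gap, not a routine symmetrization. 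Schwarz rearrangement on $\re^2$ leaves all three functionals $\int a$, $\int a^2$, $\int I_{S^3}(a)$ unchanged, so it gives nothing; and a Jensen--type flattening increases $\int I_{S^3}(a)$ while \emph{decreasing} $\int a^2$, so the two terms respond in opposite directions and neither simple monotone rearrangement reduces to the rectangular case. Some quantitative trade-off argument (on the distribution function of $a$) is required and is not supplied. Moreover the numerical margin at $v=16$ in the rectangular case is only on the order of $5$--$10\%$, so there is little slack to absorb a lossy reduction. As written, the argument proves the lemma only for product regions $A\times B$.

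The paper sidesteps exactly this issue by invoking Morgan's product theorem \cite[Theorem~2.1]{Morgan1}, which (under the concavity of $I_{S^3}$ and $I_{\re^2}$, both satisfied here) gives at once $I_{(S^3\times\re^2)}(v)\geq I_P(v)/\sqrt2$, where $I_P$ is the infimum of $f_1(v_1)v_2+f_2(v_2)v_1$ over $v_1v_2=v$. That theorem performs the reduction to ``rectangular data'' once and for all, so what remains is precisely the one-variable minimization over the geodesic radius $t$ that you carry out: the paper uses $v\geq16\Rightarrow\sqrt v\geq4$ to decouple the $v$-dependence and checks a single function of $t$ is minimized at $t=\pi$ with value $\pi^{3/2}\sqrt2$. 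To make your proof complete you should either close the rectangular-to-general gap with an explicit argument, or simply cite Morgan's theorem, which is the tool designed to justify that reduction.
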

 
\begin{proof}

Let $f_1$ and $f_2$  be the isoperimetric profiles for $(S^3,g_0^3 )$ and $(\re^2,dx^2 )$ respectively. Isoperimetric regions in $(S^3,g_0^3)$ 
are geodesic balls and then $f_1(v_1(t))=4 \pi \sin^2 (t)$, 
where $v_1(t)= 2 \pi (t - \cos(t) \sin(t))$  ($t\in [0,\pi]$ and hence $v_1 \in [0,2 \pi^2]$). Isoperimetric regions in $(\re^2, dx^2 )$ are also geodesic balls, and so  we have 
$f_2(t)= 2 \sqrt{\pi}\sqrt{t}$. 

Now consider the isoperimetric function for product regions  in $(S^3\times \re^2,g_0^3 +dx^2 )$;
$I_P (v)=\inf  \{f_1(v_1) v_2 +f_2(v_2) v_1  \ : \  v_1 v_2 =v\}$, which can be rewritten  as 
$$I_P(v)=\inf  \left( \frac{2 \sin^2 (t) v}{ t - \cos(t) \sin(t)}  + 2 \sqrt{\pi}\sqrt{v} \sqrt{2 \pi (t - \cos(t) \sin(t))}  \ : \  t\in (0,\pi ) \right) .$$

By a result of F. Morgan \cite[Theorem 2.1]{Morgan1} we have that  $\ I_{(S^3\times \re^2,g_3+dx^2 )}(v) \geq \frac{I_P(v)}{\sqrt{2}}$. 
Hence, verifying that $I_P(v)\geq(2\pi)^{3/2} \sqrt{v}$, for $v\geq16$, 
will  yield the Lemma. For that purpose, consider 
$$F_v(t)=  2 \sqrt{v} \left(\frac{\sin^2(t)\sqrt{v}}{ t - \cos(t) \sin(t)}  +  \pi \sqrt{2 (t - \cos(t) \sin(t))} \right),$$
 \noindent and let $v\geq16$. Then
$$ F_v(t)\geq 2 \sqrt{v} \left(\frac{4 \sin^2(t)}{ (t - \cos(t) �sin(t))}  +  \pi \sqrt{2 (t - \cos(t) \sin(t))} \right). $$
\noindent But it is easy to check that  $\frac{4 \sin^2 (t) }{ (t - \cos(t) \sin(t))}  +  \pi \sqrt{2 (t - \cos(t) \sin(t))} \geq  \pi^{3/2} \sqrt{2}$, for $t\in (0,\pi )$
(the minimum is achieved at $\pi$). Then
 $ I_P(v)\geq(2\pi)^{3/2} \sqrt{v}$, and the lemma follows.

\end{proof}

\begin{Lemma}
\label{S42}
$I_{(S^3 \times \re^2, g_0^3+dx^2)}(v) \geq \sqrt{\frac{3}{4} }   I_{(S^5,\frac{5}{2} \ g_0^5)} (v)$, for $v\geq 80$.  
\end{Lemma}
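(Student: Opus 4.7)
\noindent\emph{Proof plan.} The plan is to combine the $\sqrt{v}$-type lower bound of Lemma 3.5 with explicit formulas for $I_{(S^5,(5/2)g_0^5)}$ obtained from Pedrosa's work, bringing in Corollary 3.2 in the intermediate range where the $\sqrt{v}$ bound is still weak.

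First I would dispose of very large volumes analytically. The profile $I_{(S^5,(5/2)g_0^5)}$ is maximized at the equator with value $(5/2)^2 V_4 = 50\pi^2/3$, so $\sqrt{3/4}\,I_{(S^5,(5/2)g_0^5)}(v) \leq 25\pi^2/\sqrt{3}$ for every $v$. Lemma 3.5 gives $I_{(S^3\times\re^2,g_0^3+dx^2)}(v) \geq 2\pi^{3/2}\sqrt{v}$, and $2\pi^{3/2}\sqrt{v} \geq 25\pi^2/\sqrt{3}$ exactly when $v \geq 625\pi/12 \approx 163.6$. This settles all $v$ beyond that threshold.

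For the intermediate range $80 \leq v \leq 625\pi/12$ I would verify the inequality numerically. On the right-hand side I use Pedrosa's formulas in $(S^5,(5/2)g_0^5)$: a geodesic ball of angular radius $\rho$ has volume $(5/2)^{5/2} V_4\,J(\rho)$ and boundary area $(5/2)^2 V_4 \sin^4(\rho)$, where $J(\rho)=\int_0^\rho \sin^4 s\,ds$. On the left-hand side I take the maximum of Lemma 3.5's bound $2\pi^{3/2}\sqrt{v}$ and the bound obtained from Corollary 3.2 combined with the fact that, for $v$ past the ball/cylinder transition of $(S^4\times\re,2^{2/3}(g_0^4+dx^2))$, the profile is the constant $2\cdot 2^{4/3}V_4=2^{7/3}V_4$, giving $I_{(S^3\times\re^2,g_0^3+dx^2)}(v)\geq 0.99\cdot 2^{7/3}V_4$. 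A graphical comparison analogous to the figures in the proofs of Lemmas 3.1 and 3.4 finishes the verification.

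The main difficulty I anticipate is the volume range near $v\approx 153$, where $I_{(S^5,(5/2)g_0^5)}$ is close to its maximum and both available lower bounds are simultaneously tightest relative to the right-hand side. It is precisely the interplay between Corollary 3.2's constant bound (effective once the cylindrical regime of $S^4\times\re$ is active) and Lemma 3.5's growing bound (dominating once $v$ is large enough) that dictates the threshold $v=80$: below it one instead couples Corollary 3.2 with Lemma 3.4 directly, and above it these two complementary estimates combine to close the gap.
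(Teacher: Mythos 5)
Your treatment of the far tail is correct: the maximum of $\sqrt{3/4}\,I_{(S^5,(5/2)g_0^5)}$ is indeed $25\pi^2/\sqrt{3} \approx 142.4$, and the Lemma~3.5 bound $2\pi^{3/2}\sqrt{v}$ exceeds this precisely once $v\geq 625\pi/12\approx 163.6$.

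The gap is in the intermediate range, and it is exactly where you predicted trouble but did not resolve it. Near the peak of the target, at $v\approx \tfrac12(5/2)^{5/2}\pi^3\approx 153$, the two pointwise lower bounds you propose give $2\pi^{3/2}\sqrt{153}\approx 138$ and $0.99\cdot 2^{7/3}V_4\approx 131$. Their maximum is about $138$, which is \emph{strictly less} than $142.4$; taking the pointwise maximum of the two available bounds does not close the inequality, and a numerical check would in fact refute the bound you wrote. The missing ingredient is that the paper's argument does not compare the target to the max of the two bounds but to a \emph{chord}. Since $(S^3\times\re^2,g_0^3+dx^2)$ has nonnegative Ricci curvature, Bayle's theorem makes its isoperimetric profile concave, so any straight line joining two verified lower-bound points (here $(75.517,131.312)$ on $0.99\,I_{(S^4\times\re,2^{2/3}(g_0^4+dx^2))}$ and $(450,\,30\sqrt{2}\,\pi^{3/2})$ on $2\pi^{3/2}\sqrt{v}$) is itself a lower bound for the profile in between. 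That line has value about $153$ at $v\approx 153$, comfortably above $142.4$, whereas the max-of-two-bounds function dips below. Concavity is what lets a modest bound at $v\approx 75$ and a modest bound at $v=450$ jointly dominate a target whose peak lies between them; without invoking concavity to interpolate, the argument fails.
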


\begin{proof}

Using again the theorem of Bayle \cite[page 52]{Bayle}, we know that  $I_{(S^3\times \re^2,g_0^3+dx^2)}$ is concave.
 Of course, this implies that any line connecting two values of known lower bounds for $I_{(S^3\times \re^2,g_0^3 +dx^2)}$ is also 
a lower bound for the isoperimetric function.  In particular, the line $l(v)=131.312+0.280204 (v-75.517)$, which joins the point 
$(75.517,131.312)$ in  the graphic of  
 $0.99 \ I_{(S^4 \times \re,2^{2/3} g_0^4+dt^2)}(v)$  and the  point $(450,6 \ 30 \pi^{3/2} \sqrt{2}  )$ in the graphic of
  $ \frac{(2 \pi)^{3/2}}{\sqrt{2}} \sqrt{v}$, is  a lower bound for  $I_{(S^3\times \re^2,(g_0^3+dx^2))}$ (fig. \ref{fig:llb}). 
  Finally, standard numerical computations 
show that this line is also an upper bound for   
$\sqrt{\frac{3}{4} }   I_{(S^5,\frac{5}{2} \ g_0^5)} $, for $v\geq 80$ (fig. \ref{fig:lub}), and hence 
 $I_{(S^3\times \re^2, (g_3+dx^2))}\geq  \sqrt{\frac{3}{4} }    I_{(S^5,\frac{5}{2} \ g_0^5)} (v)$, for $v\geq 80$.

\end{proof}

\begin{figure}[h!]

                        \begin{center}
\subfigure[The line $l(v)$ 
joins the graphics of two lower bounds for  $I_{S^3\times \re^2,(g_3+dx^2)}$.]{
                                                  \includegraphics[scale=0.250]{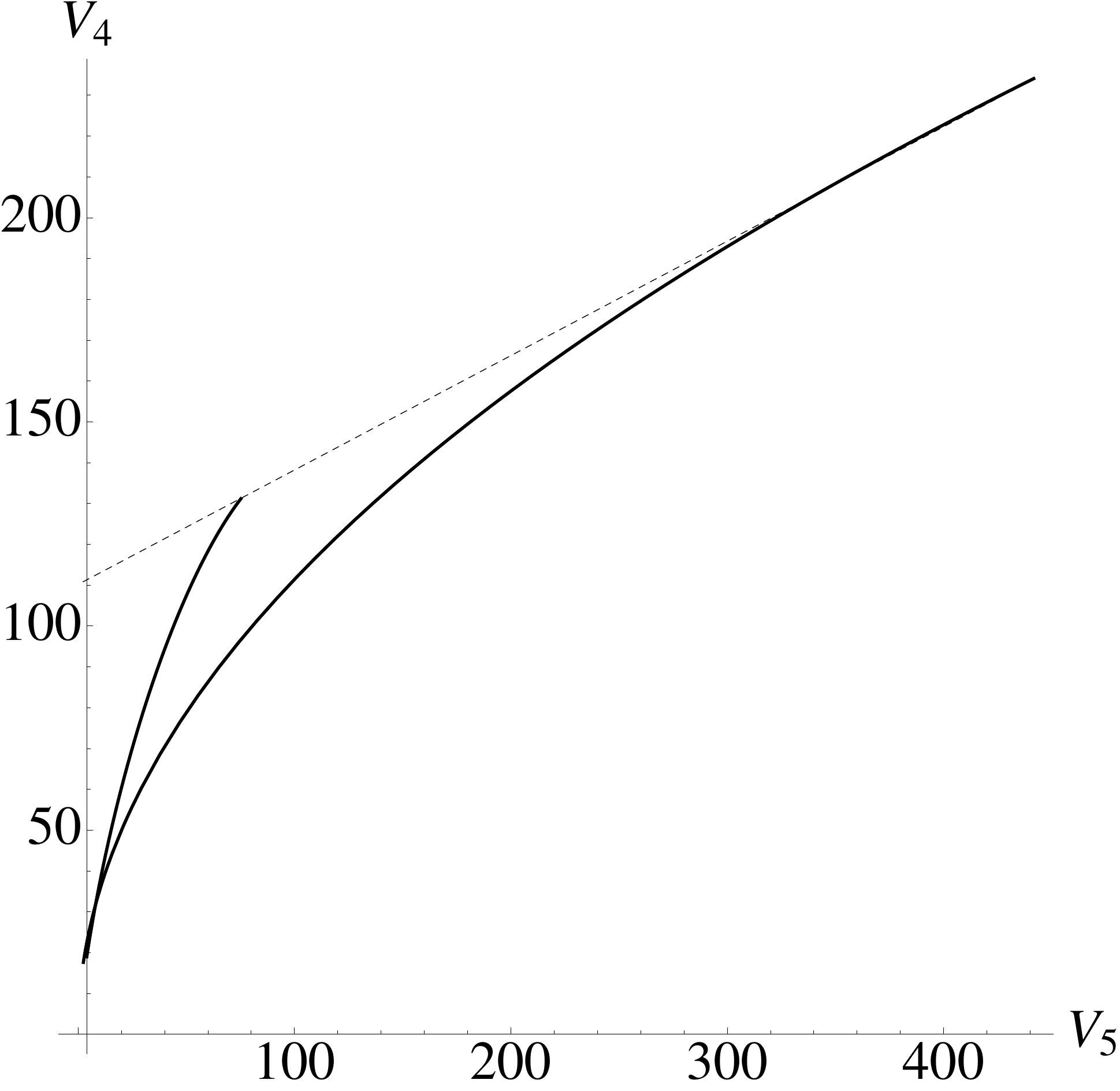}}      
 \label{fig:llb} \hspace{1cm}
\subfigure[The line $l(v)$
 is an upper bound for $\sqrt{\frac{3}{4} }  I_{(S^5,\frac{5}{2} \ g_0^5)} (v)$, for $v\geq 80$.]{
                                                  \includegraphics[scale=0.230]{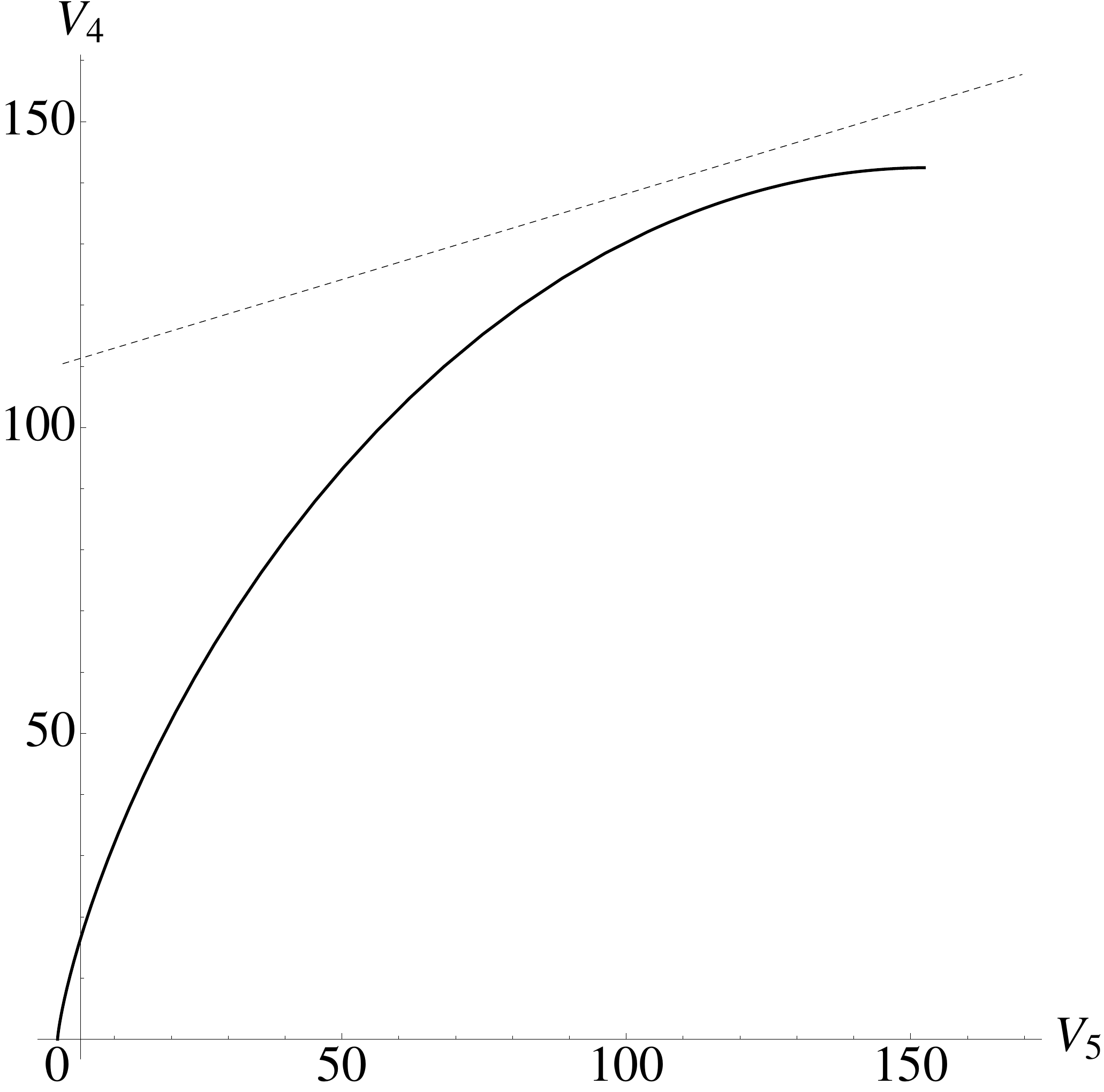}}
                                \end{center}

 \label{fig:lub}

                \caption{$I_{(S^3\times \re^2, (g_3+dx^2))}\geq  \sqrt{\frac{3}{4} }    I_{(S^5,\frac{5}{2} \ g_0^5)} (v)$, for $v\geq 80$.}
                
\end{figure}

Corollary 3.2, Corollary 3.4 and Lemma 3.6 complete the proof of Theorem1.3.

\section{Estimating the isoperimetric profile of $S^2 \times \re^3$ }

In this section we will prove Theorem 1.2. The isoperimetric function of $(S^5, g_0^5 )$ is given
by $I_{(S^5, g_0^5 )} ( 2\pi^2 ((1/3) \cos^3 (r)-\cos (r) +(2/3)))=(8/3) \pi^2 \sin^4 (r)$. 
And so $\frac{3\sqrt{7}}{10} I_{(S^5 , 6.3 \  g_0^5 )} ((6.3)^{5/2} (2\pi^2 ((1/3) \cos^3 (r)-\cos (r) +(2/3)) ) = \frac{3\sqrt{7}}{10} (6.3)^2 (8/3) \pi^2 \sin^4 (r)$.
The first observation is that the maximum of $\frac{3\sqrt{7}}{10} I_{(S^5 , (63/10) \  g_0^5 )}$ is
$\frac{3\sqrt{7}}{5} (63/10)^2 Vol(S^4 )=\frac{3\sqrt{7}}{10} (63/10)^2  (8/3) \pi^2 \approx 829.12$
and is achieved at 
$v= (1/2)  (63/10)^{5/2} Vol (S^5 )= (1/2) (63/10)^{5/2} \pi^3 \approx  1544.44$.  After this
value of $v$ the function $\frac{3\sqrt{7}}{10} I_{(S^5 , (63/10) \  g_0^5 )}$ is decreasing while 
$I_{(S^2 \times \re^3 ,g_0^2+dx^2)}$ is always non-decreasing. It follows that to prove Theorem 1.2
we only need to consider the case $v\leq 1544.44$.

\begin{Lemma}$I_{(S^2 \times \re^3 ,g_0^2+dx^2)} \geq 0.99 \ I_{(S^4 \times \re, 2^{5/3}( g_0^4 + dx^2 ))}$. 
\end{Lemma}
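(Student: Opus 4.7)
The plan mirrors the structure of Corollary 3.2 and Corollary 3.4: start from a base inequality in low dimension, promote it via Ros's product theorem with Euclidean factors, and concatenate with a comparison already proved in Section 3. The key observation is that the right-hand side of Lemma 4.1 coincides with the right-hand side of Corollary 3.4. So rather than redoing the entire chain of comparisons of Section 3 tailored to the $S^2$ factor, I would try to reduce Lemma 4.1 to the single intermediate comparison
\[
I_{(S^2 \times \re^3, g_0^2 + dx^2)} \geq I_{(S^3 \times \re^2, 2(g_0^3 + dx^2))}
\]
and then invoke Corollary 3.4 directly.

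To obtain this intermediate inequality I would use Ros's product theorem to reduce matters further to a three-dimensional base comparison. Specifically, I would first prove
\[
I_{(S^2 \times \re, g_0^2 + dx^2)} \geq I_{(S^3, 2 g_0^3)}
\]
and then apply Ros's product theorem twice, adding one Euclidean factor at a time, to lift this to
\[
I_{(S^2 \times \re^3, g_0^2 + dx^2)} \geq I_{(S^3 \times \re^2, 2 g_0^3 + dx^2)}.
\]
The right-hand side here is isometric to $(S^3 \times \re^2, 2(g_0^3 + dx^2))$ via the rescaling $\phi(x) = x/\sqrt{2}$ on the $\re^2$ factor (which sends $dx^2$ to $2dx^2$), so their isoperimetric profiles coincide. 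Corollary 3.4 then yields the desired $0.99\ I_{(S^4 \times \re,2^{5/3}(g_0^4 + dx^2))}$ bound without any additional numerical slack.

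The main obstacle is the three-dimensional base inequality $I_{(S^2 \times \re, g_0^2 + dx^2)} \geq I_{(S^3, 2 g_0^3)}$. The scaling factor $2$ on $g_0^3$ is the natural choice for two reasons: the leading behavior $\gamma_3 v^{2/3}$ as $v\to 0$ agrees on both sides, and the two asymptotes match at the value $8\pi$, since for $(S^3, 2 g_0^3)$ the maximum of the profile is the volume of its equator $(S^2, 2 g_0^2)$, namely $8\pi$, while cylindrical sections $S^2 \times [a,b]$ in $S^2 \times \re$ have boundary area $2\cdot 4\pi = 8\pi$. These matching endpoints make the inequality plausible with no multiplicative slack. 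I would verify the inequality for $v \in (0, 4\sqrt{2}\pi^2)$ in the style of Lemma 3.1: handle the small-volume range using Bayle's monotonicity of $I(v)/v^{2/3}$ (which applies since $S^2 \times \re$ has non-negative Ricci curvature) together with the matching of the leading constant $\gamma_3$, and handle the remaining intermediate range by standard numerical evaluation based on Pedrosa's integrals (\ref{area}) and (\ref{volume}) and the explicit profile of $(S^3, g_0^3)$. Should the clean comparison fail by a small amount at some intermediate $v$, one would absorb the small defect into the $0.99$ factor on the right-hand side by adjusting the base inequality slightly.
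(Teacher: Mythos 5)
Your proposal matches the paper's proof essentially verbatim: the paper also starts from the base inequality $I_{(S^2\times\re,g_0^2+dx^2)}\geq I_{(S^3,2g_0^3)}$ (which it cites from \cite{Ruiz}, Section 2.1, rather than reproving as you sketch), promotes it to $I_{(S^2\times\re^3,g_0^2+dx^2)}\geq I_{(S^3\times\re^2,2(g_0^3+dx^2))}$ via the Ros product theorem together with the rescaling isometry $(\re,dx^2)\cong(\re,2dx^2)$, and then concludes by Corollary 3.3 (you cite it as 3.4, the paper miscites it as 2.4, but both clearly mean the same statement). The only differences are cosmetic: at which stage the rescaling isometry is applied, and whether the base inequality is proved afresh or cited.
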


\begin{proof}We know from \cite{Ruiz}, section 2.1, that   $I_{(S^2 \times \re,g_0^2+dx^2)} \geq I_{(S^3, 2 g_0^3 )}$.
This implies using Ros product theorem \cite{Ros, Morgan2}  that
$I_{(S^2 \times \re^2 ,g_0^2+dx^2)} \geq I_{(S^3 \times \re , 2 g_0^3  +dx^2 )} = I_{(S^3 \times \re , 2 (g_0^3  +dx^2) )}$.
Then by using again the Ros product theorem one gets 

$$I_{(S^2 \times \re^3 ,g_0^2+dx^2)} \geq I_{(S^3 \times \re^2 , 2 (g_0^3  +dx^2) )}.$$

But by Corollary 2.4  $I_{(S^3 \times \re^2 ,2(g_0^3 +dx^2))} \geq 0.99 \ I_{(S^4 \times \re, 2^{5/3}( g_0^4 + dx^2 ))}$,
and the lemma follows.
\end{proof}

We now prove the following.

\begin{Lemma}
\label{S4xRvsS5beta}
$I_{(S^4 \times \re, 2^{5/3} (g_0^4+dx^2))}(v) \geq \frac{3\sqrt{7}}{9.9}   I_{(S^5,  (63/10) \ g_0^5)} (v)$, for $v\leq 427$. And so
Theorem 1.2 is true for $v\leq 427$.  
\end{Lemma}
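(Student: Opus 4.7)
The plan is to parallel the two-part structure of Lemma \ref{S4}: first handle a small-volume interval $(0, v_1]$ via an asymptotic argument combined with Bayle's monotonicity, and then handle the intermediate range $[v_1, 427]$ by direct numerical evaluation of the explicit formulas (\ref{area}) and (\ref{volume}).

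For the small-volume regime, both $(S^4\times\re,\,2^{5/3}(g_0^4+dx^2))$ and $(S^5,(63/10)g_0^5)$ have non-negative Ricci curvature, so Bayle's theorem (\cite[page 52]{Bayle}) yields that $I(v)/v^{4/5}$ is non-increasing for each. Because the classical isoperimetric constant $\gamma_5$ is invariant under metric rescaling, both ratios tend to $\gamma_5$ as $v\to 0^+$, and since $\tfrac{3\sqrt{7}}{9.9}\approx 0.802<1$ the asymptotic inequality holds with a strict margin. Concretely, pick a small $v_1>0$ and verify numerically, via (\ref{area}) and (\ref{volume}), that
$$\frac{I_{(S^4\times\re,\,2^{5/3}(g_0^4+dx^2))}(v_1)}{v_1^{4/5}} \;>\; \tfrac{3\sqrt{7}}{9.9}\,\gamma_5.$$
Then for $v\in(0,v_1]$, combining Bayle's monotonicity on the left-hand side with the bound $I_{(S^5,(63/10)g_0^5)}(v)/v^{4/5}\leq\gamma_5$ on the right-hand side yields the inequality on $(0,v_1]$.

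For the intermediate range $v_1\leq v\leq 427$, I would use formulas (\ref{area}) and (\ref{volume}) to compute both isoperimetric profiles and confirm the inequality numerically, exhibiting a plot in the spirit of Figure \ref{fig:S4xR1}. The main obstacle is that $v\leq 427$ extends well beyond the range $v\leq 80$ treated in Lemma \ref{S4}, so the numerical check must pass through (or smoothly bridge) the volume at which isoperimetric regions on $(S^4\times\re, g_0^4+dx^2)$ transition from the ball-type regions $\Omega_h^4$ to cylindrical slabs $S^4\times[a,b]$. The slack provided by $\tfrac{3\sqrt{7}}{9.9}<1$ leaves enough room to tolerate this transition. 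Finally, the concluding sentence ``Theorem 1.2 is true for $v\leq 427$'' follows by combining the lemma with Lemma 4.1, since $0.99\cdot\tfrac{3\sqrt{7}}{9.9}=\tfrac{3\sqrt{7}}{10}$.
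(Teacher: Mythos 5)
Your proposal follows the same two-step structure the paper uses: for small $v$ (the paper takes the cut-off $v_1 = 100$), compare the ratio $I(v)/v^{4/5}$ at $v_1$ against $\tfrac{3\sqrt{7}}{9.9}\gamma_5$ and invoke Bayle's monotonicity of that ratio for both manifolds (which have non-negative Ricci curvature); for $v_1 \leq v \leq 427$, rely on a direct numerical check via formulas (\ref{area}) and (\ref{volume}), illustrated by a plot. Your closing remark that combining with Lemma~4.1 and the identity $0.99\cdot\tfrac{3\sqrt{7}}{9.9}=\tfrac{3\sqrt{7}}{10}$ yields Theorem~1.2 on $v\leq 427$ is exactly the paper's route, so this is essentially the same argument.
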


\begin{proof}

We begin by proving the inequality for $v\leq 100$. Direct computation using  (\ref{area}) and (\ref{volume}) shows that
 $\frac{I_{(S^4 \times \re, 2^{5/3} (g_0^4+dt^2))}(100)}{100^{4/5}} \approx 5.6106 > 5.5881\approx \frac{3\sqrt{7}}{9.9}  \gamma_5 = \lim_{v\rightarrow 0}
\frac{3\sqrt{7}}{9.9}    \frac{I_{(S^5, \frac{63}{10} g_0^5)}(v)}{v^{4/5}} $. 
Since $(S^5, \frac{63}{10} g_0^5)$ and $(S^4 \times \re,2^{5/3} (g_0^4+dt^2)$ have non-negative
 Ricci curvature it follows from  \cite{Bayle} that  both  $\frac{I_{(S^5, \frac{32}{5} g_0^5)}(v)}{v^{4/5}}$ and   $\frac{I_{(S^4 \times \re,2^{5/3} (g_0^4+dt^2))}(v)}{v^{4/5}}$ are decreasing. Therefore

 $$I_{(S^4 \times \re, 2^{5/3} (g_0^4+dt^2))} (v)\geq  \frac{I_{(S^4 \times \re,2^{5/3} (g_0^4+dt^2))}(100)}{(100)^{4/5}} v^{4/5}>  \frac{3\sqrt{7}}{9.9}  \gamma_5  v^{4/5}
\geq  \frac{3\sqrt{7}}{9.9}  \gamma_5  I_{(S^5, \frac{63}{10} g_0^5)} (v),$$

\noindent
for  $0\leq v\leq 100$.

Next,  we check the inequality  for $100 \leq v \leq 427$, using standard numerical computations, based on formulas (\ref{area}) and (\ref{volume}). We provide the graphics (fig. \ref{fig:S4xR2}).

\begin{figure}[h!]
                
                        \begin{center}
                                                  \includegraphics[scale=0.150]{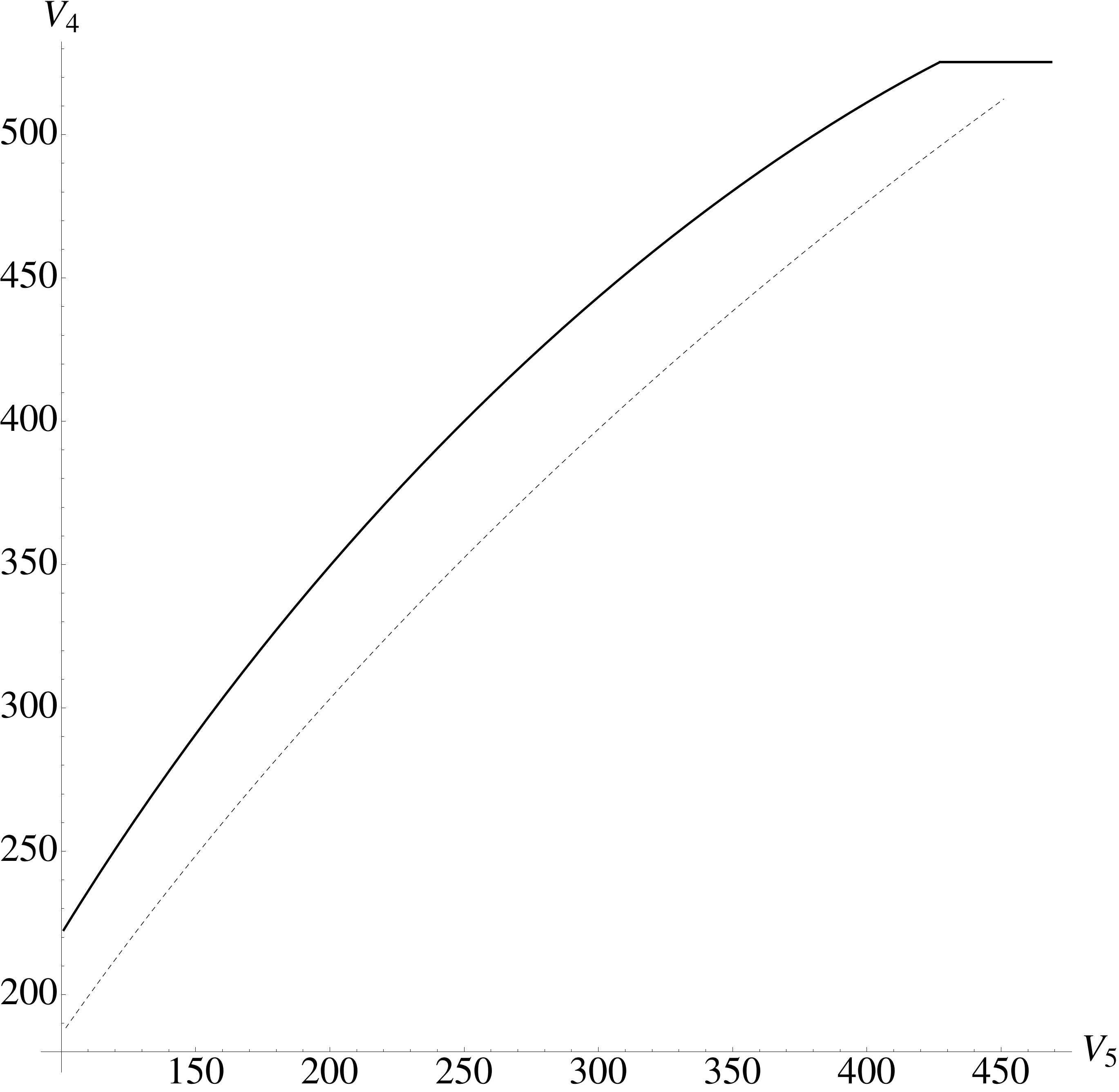}
                                \end{center}
                 \caption{$I_{(S^4 \times \re, 2^{5/3} (g_0^4+dt^2))}(v) \geq \frac{4}{5}  I_{(S^5, \frac{32}{5} g_0^5)} (v)$, for $100 \leq v \leq 427$.}
                 \label{fig:S4xR2}
\end{figure}

\end{proof}

\begin{Lemma}

 For $v\geq27$,  $ I_{(S^2\times \re^3,g_0^2+dx^2 )}(v) \geq  2^{5/6}  (3 \pi)^{2/3} v^{2/3}$.
\end{Lemma}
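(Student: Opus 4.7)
Following the approach of Lemma 3.5, my plan is to apply F.~Morgan's product inequality \cite[Theorem 2.1]{Morgan1} to get $I_{(S^2 \times \re^3, g_0^2 + dx^2)}(v) \geq I_P(v)/\sqrt{2}$, where $I_P$ is the infimum of the boundary volume taken over product regions $\Omega_1 \times \Omega_2$. I would then substitute the two factor profiles. Spherical caps in $(S^2, g_0^2)$ yield $f_1(v_1(t)) = 2\pi \sin t$ with $v_1(t) = 2\pi(1 - \cos t)$ for $t\in[0,\pi]$; Euclidean balls in $(\re^3, dx^2)$ yield $f_2(w) = (36\pi)^{1/3} w^{2/3}$. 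Plugging these into $f_1(v_1)v_2 + v_1 f_2(v_2)$ subject to the constraint $v_1 v_2 = v$, and simplifying via the half-angle identities $\sin t/(1-\cos t)=\cot(t/2)$ and $(1-\cos t)^{1/3}=2^{1/3}\sin^{2/3}(t/2)$, gives
$$F_v(t) = v\cot(t/2) + (144\pi^2)^{1/3} v^{2/3} \sin^{2/3}(t/2).$$

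Next I would exploit $v\geq 27$ through the splitting $v = v^{1/3}v^{2/3} \geq 3v^{2/3}$ to extract the bound
$$F_v(t) \geq v^{2/3}\bigl[\,3\cot(t/2) + (144\pi^2)^{1/3}\sin^{2/3}(t/2)\,\bigr].$$
A quick arithmetic check shows $(144\pi^2)^{1/3} = (12\pi)^{2/3}$ and $(144\pi^2)^{1/3}/\sqrt{2} = 2^{5/6}(3\pi)^{2/3}$, so after dividing the Morgan bound by $\sqrt{2}$ the target inequality reduces to the single scalar statement
$$g(u) \df 3\cot u + (144\pi^2)^{1/3}\sin^{2/3} u \geq (144\pi^2)^{1/3}, \qquad u \in (0,\pi/2],$$
with equality at $u=\pi/2$. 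This parallels exactly the ``easy to check'' step used at the end of Lemma 3.5.

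The main obstacle is verifying this scalar inequality, because the minimum is attained at the right endpoint rather than at an interior critical point, so a first-order argument at $\pi/2$ is not directly available (indeed $g'(\pi/2) = -3$). My plan is to analyze $g$ as follows: $g(u)\to+\infty$ as $u\to 0^+$, $g(\pi/2) = (144\pi^2)^{1/3}$, and critical points of $g$ satisfy $\sin^{5/3} u\,\cos u = 9/\bigl(2(144\pi^2)^{1/3}\bigr)$. The function $u\mapsto\sin^{5/3}u\cos u$ is unimodal on $(0,\pi/2)$, peaking at $\sin^2 u = 5/8$ with maximum value $(5/8)^{5/6}(3/8)^{1/2}$, so there are at most two critical points: a local minimum $u_1$ followed by a local maximum $u_2$. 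A direct numerical evaluation at $u_1$ (where $\sin u_1\approx 0.72$) shows $g(u_1)$ exceeds $(144\pi^2)^{1/3}$ by a comfortable margin, while on $(u_2,\pi/2]$ the function is monotone decreasing to its boundary value; this rules out any violation of the inequality. Dividing the resulting bound for $F_v$ by $\sqrt{2}$ then yields the stated inequality $I_{(S^2\times\re^3,g_0^2+dx^2)}(v)\geq 2^{5/6}(3\pi)^{2/3}v^{2/3}$ for $v\geq 27$.
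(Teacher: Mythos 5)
Your proposal is correct and follows the paper's proof essentially verbatim: Morgan's product inequality, the same parametrization of the product profile $I_P$, the same splitting $v = v^{1/3}v^{2/3} \geq 3v^{2/3}$ for $v\geq 27$, and the same reduction to a one-variable inequality whose minimum sits at the endpoint $t=\pi$ (your $u=\pi/2$). The only difference is cosmetic — you pass to half-angles and actually carry out the critical-point analysis that the paper dismisses as ``easy to check'' — so this is a fleshed-out version of the same argument, not a different route.
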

 
\begin{proof}

Let $h_1$ and $h_2$  be the isoperimetric profiles for $(S^2,g_0^2)$ and $(\re^3,dx^2)$ respectively. 
Isoperimetric regions in $(S^2,g_0^2)$ are geodesic balls and then $h_1(v_1(t))=2 \pi \sin(t)$, 
where $v_1(t)= 2 \pi (1 - \cos(t))$, ($t\in [0,\pi]$ and hence $v_1 \in [0,4 \pi]$). 
Similarly $h_2(t)= 6^{2/3} \pi^{1/3} t^{2/3}$. 
Now consider the isoperimetric function for product regions in $(S^2\times \re^3,g_0^2+dx^2)$,
$I_P(v)=\inf \{h_1(v_1) v_2 +h_2(v_2) v_1 \ : \  v_1 v_2 =v\}$, which can be rewritten  as 

$$I_P(v)=\inf \left( \frac{v \ \sin(t)}{1 - \cos(t)}  + 2 (3\pi)^{2/3} \left(\frac{v}{1-\cos(t)}\right)^{2/3}  (1 - \cos(t))  \ : \   t\in (0,\pi ) \right) .$$

It follows from \cite[Theorem 2.1]{Morgan1} that  $ I_{(S^2\times \re^3,g_0^2+dx^2 )}(v) \geq \frac{I_P(v)}{\sqrt{2}}$, since both $ I_{S^2}$ and $I_{\re^3}$ are concave. Hence, it remains to show that 
 $I_P(v)\geq  \ 2^{4/3} (3 \pi)^{2/3} v^{2/3}$, for $v\geq27$, to prove the lemma. For that purpose, consider 
$$F_v(t)=  v^{2/3} \left(\frac{v^{1/3} \ \ \sin(t)}{1 - \cos(t)}  + 2 (3\pi)^{2/3}  (1-\cos(t))^{1/3}  \right),$$
 \noindent and let $v\geq27$. Then
$$ F_v(t)\geq v^{2/3} \left(\frac{3 \ \ \sin(t)}{1 - \cos(t)}  + 2 (3\pi)^{2/3}  (1-\cos(t))^{1/3}  \right)$$
\noindent But, as it is easy to check,  

$$\frac{3 \ \sin(t)}{1 - \cos(t)}  + 2 (3\pi)^{2/3}  (1 - \cos(t))^{1/3}   \geq 2 (2^{1/3}) (3 \pi)^{2/3},$$ 

\noindent
for $t\in [0,\pi]$ (the minimum of the expresion on the left is achieved precisely at $\pi$). Hence
 $ I_P(v)\geq  2 (2^{1/3}) (3 \pi)^{2/3} v^{2/3}$, and the lemma follows.

\end{proof}

\begin{Lemma}
Theorem 1.2 is true  for $v\geq 427$.  
\end{Lemma}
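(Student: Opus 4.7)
The plan is to mimic the argument of Lemma 3.6. By Bayle \cite[page 52]{Bayle}, the manifold $(S^2 \times \re^3, g_0^2 + dx^2)$ has non-negative Ricci curvature, so $I_{(S^2 \times \re^3, g_0^2 + dx^2)}$ is concave. As computed at the start of this section, $\frac{3\sqrt{7}}{10} I_{(S^5 , (63/10) g_0^5 )}$ attains its maximum $\approx 829.12$ at $v \approx 1544.44$ and is strictly decreasing thereafter, while the LHS is non-decreasing. Thus for $v \geq 1544.44$ it is enough to verify that $I_{(S^2 \times \re^3, g_0^2 + dx^2)}(1544.44) \geq 829.12$, which is immediate from Lemma 4.3: $2^{5/6}(3\pi)^{2/3}(1544.44)^{2/3}$ exceeds $1000$. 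This reduces the remaining work to the range $427 \leq v \leq 1544.44$.

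On this interval I would produce two explicit lower bound data points for the LHS and then interpolate linearly, exploiting concavity. At $v_1 = 427$, Lemma 4.1 together with the Pedrosa formulas (\ref{area})-(\ref{volume}) gives a computable numerical value $L_1 := 0.99\, I_{(S^4 \times \re, 2^{5/3}(g_0^4+dx^2))}(427)$. At a point $v_2$ chosen slightly beyond $1544.44$, Lemma 4.3 gives $L_2 := 2^{5/6}(3\pi)^{2/3}\, v_2^{2/3}$. By concavity of $I_{(S^2 \times \re^3, g_0^2 + dx^2)}$, the chord $l(v)$ joining $(v_1, L_1)$ and $(v_2, L_2)$ is itself a lower bound for $I_{(S^2 \times \re^3, g_0^2 + dx^2)}$ on $[v_1, v_2]$, and in particular on $[427, 1544.44]$.

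It then remains to verify numerically, supported by graphics analogous to those of Lemma 3.6, that $l(v) \geq \frac{3\sqrt{7}}{10} I_{(S^5, (63/10) g_0^5)}(v)$ throughout $[427, 1544.44]$. The right-hand side is given explicitly in parametric form $v = (63/10)^{5/2}\cdot 2\pi^2\bigl((1/3)\cos^3(r) - \cos(r) + 2/3\bigr)$ and $\frac{3\sqrt{7}}{10}(63/10)^2(8/3)\pi^2 \sin^4(r)$, so this is a routine plot-level comparison. The main obstacle is calibrating $v_2$: since the RHS is concave on $[v_1, v_2]$, dominating it at the two endpoints does not by itself guarantee domination in the interior, so one needs $v_2$ large enough that $L_2$ comfortably exceeds the $\approx 829$ maximum yet close enough to $1544.44$ that the chord does not tilt too far below the RHS near its peak. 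A suitable choice (say $v_2$ in the range where $L_2$ already surpasses $900$ or so) makes the two concave curves separate cleanly and closes the proof.
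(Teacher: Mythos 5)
Your proposal takes essentially the same route as the paper: reduce to $v \leq$ (roughly) the location of the maximum of $\frac{3\sqrt{7}}{10} I_{(S^5, (63/10)g_0^5)}$, exploit concavity of $I_{(S^2\times\re^3, g_0^2+dx^2)}$ (Bayle) to lower-bound it by a chord anchored at Lemma 4.1's value near $v\approx 427$ on one side and Lemma 4.3's bound on the other, and then check numerically that this chord dominates the target on the intermediate range. The paper carries out exactly this construction with the explicit chord through $(427.18, 525.245)$ and $(1500, 2^{5/6}(4500\pi)^{2/3})$ and a plot-level verification (fig.~\ref{fig:lub2}), so your argument matches both in structure and in the details left to numerics.
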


\begin{proof}

Since  $I_{(S^2\times \re^3,g_0^2 +dx^2) }$ is concave 
 any line connecting two values of known lower bounds for $I_{(S^2\times \re^3,g_0^2 +dx^2 )}$ is also 
a lower bound for the function (between the two points). 
In particular, the line 

$$f(v)=525.45 + \frac{(2^{5/6} (4500 \pi)^{2/3}-525.245)(v-427.18)}{1073},$$ 

\noindent
which joins the point  $(427.18,525.245)$ (in the graphic of  
$ 0.99    I_{(S^4 \times \re, 2^{5/3} (g_0^4+dx^2))}$)  and $(1500,2^{5/6} (4500 \pi)^{2/3})$ (which  belongs to the graphic of
  $ 2^{5/6} (3 \pi)^{2/3} v^{2/3}$), is  a lower bound of $I_{(S^2\times \re^3,(g_0^2+ dx^2 ))}$ for
  $v\in [427,1500]$. Finally, standard numerical computations 
show that this line is also an upper bound for   $\frac{3\sqrt{7}}{10} I_{(S^5 , (63/10) \  g_0^5 )}$ in the
same interval (fig. \ref{fig:lub2}). And this implies in particular that for $v\geq 1500$
 $I_{(S^2\times \re^3,g_0^2 +dx^2) } (v)$ is greater than the maximum of 
$\frac{3\sqrt{7}}{10} I_{(S^5 , (63/10) \  g_0^5 )}$, proving the lemma.

\begin{figure}[h!]
                
                \begin{center}
                 \includegraphics[scale=0.200]{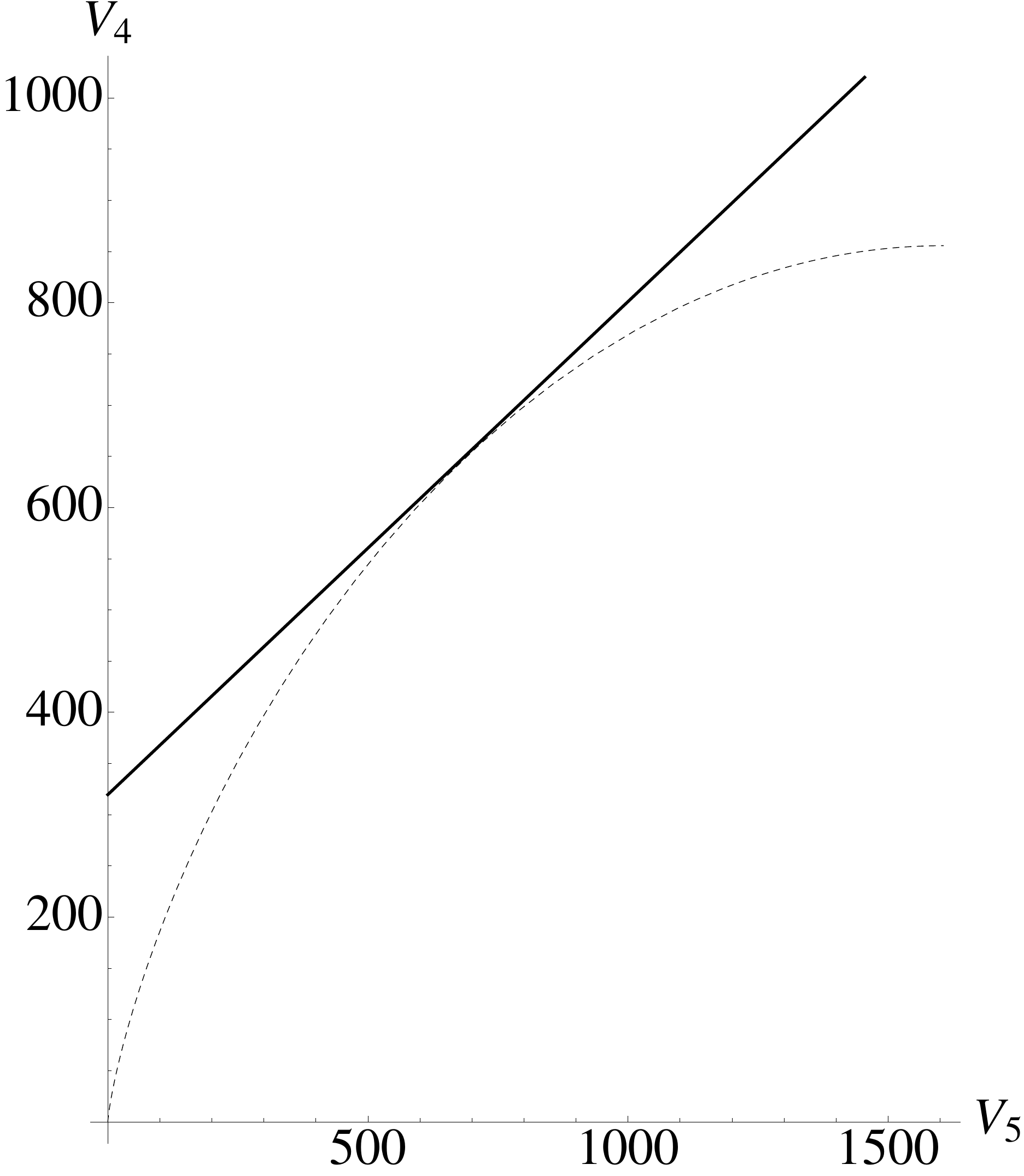}
                  \end{center}
\caption{The line $f(v)$ is an upper bound for  $\frac{3\sqrt{7}}{10}  I_{(S^5, \frac{63}{10} g_0^5)} (v)$, for $v\geq 400$.}
                                                 
                        \label{fig:lub2}
\end{figure}

\end{proof}

Theorem 1.2 follows from Lemma 4.2 and Lemma 4.4.

\section{Estimating the isoperimetric profiles of $S^7 \times \re^2$ and $S^8 \times \re^2$}

We first note as in section 3  that for any (closed or homogeneous)
Riemannian n-manifold $(M^n ,g)$ one has

$$\lim_{v\rightarrow 0} \frac{ I_{(M,g)} (v)}{v^{\frac{n-1}{n} }}= \gamma_n ,$$ 

\noindent
where $\gamma_n$ is the classical n-dimensional isoperimetric constant: 

$$\gamma_n
=\frac{Vol(S^{n-1} ,g_0^{n-1} )}{Vol (B^n (0,1), dx^2 )^{\frac{n-1}{n}}}.$$

In this section we will need the values  

$\gamma_8 = (8^7 /3)^{1/8}  \sqrt{\pi} \approx 9.5310$, 

$\gamma_9 =(32 \pi^4 9^8 /105)^{1/9}  \approx 10.2762$

and $\gamma_{10} = (10^9 /12)^{1/10} \sqrt{\pi}  \approx 10.9814$.

\begin{Lemma}
\label{S789}
 $I_{(S^7 \times \re, g_0^7 +dx^2)} \geq 0.94 \ I_{(S^8, 2^{2/7} g_0^8)}$, $I_{(S^8 \times \re, g_0^8 +dx^2)} \geq 0.92 \ I_{(S^9, 2^{1/4} g_0^9)}$ and 
 $I_{(S^9 \times \re, g_0^9 +dx^2)} \geq 0.86 \ I_{(S^{10}, 2^{2/9} g_0^{10})}$. 
\end{Lemma}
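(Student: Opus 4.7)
The plan is to replicate, for each of the three inequalities, the scheme used in Lemma~\ref{S3}. In each case the manifold $(S^n \times \re, g_0^n + dx^2)$ has non-negative Ricci curvature and so does the rescaled sphere $(S^{n+1}, \mu g_0^{n+1})$; by Bayle's theorem \cite[page 52]{Bayle}, both normalized profiles $I(v)/v^{n/(n+1)}$ are monotonically decreasing in $v$ and approach the Euclidean isoperimetric constant $\gamma_{n+1}$ as $v \to 0^{+}$. In particular, the sphere profile satisfies $I_{(S^{n+1},\mu g_0^{n+1})}(v) \le \gamma_{n+1} v^{n/(n+1)}$ for every $v>0$.

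For each $n \in \{7,8,9\}$, I would first pick a threshold $v_{0}$ and use Pedrosa's formulas (\ref{area}) and (\ref{volume}) to verify numerically that
$$I_{(S^n \times \re,\, g_0^n + dx^2)}(v_{0})/v_{0}^{\,n/(n+1)} \;>\; c\,\gamma_{n+1},$$
where $c$ equals $0.94$, $0.92$, or $0.86$ as appropriate. Monotonicity of the left-hand ratio then yields
$$I_{(S^n \times \re,\, g_0^n + dx^2)}(v) \;\geq\; c\,\gamma_{n+1}\, v^{n/(n+1)} \;\geq\; c\, I_{(S^{n+1},\mu g_0^{n+1})}(v)$$
for all $v \in (0, v_{0}]$, by exactly the same argument as in the proof of Lemma~\ref{S3}.

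At the other end of the range, a direct scaling calculation shows that with $\mu \in \{2^{2/7},\,2^{1/4},\,2^{2/9}\}$ the maximum of $I_{(S^{n+1},\mu g_0^{n+1})}$ equals $2\,\Vol(S^n)$, which is precisely the constant value that $I_{(S^n \times \re,\, g_0^n + dx^2)}$ assumes once $v$ exceeds the threshold $v_{1}$ at which cylindrical sections become isoperimetric (Pedrosa~\cite{Pedro}). Since each $c<1$, the desired inequality is immediate for $v \ge v_{1}$. The remaining intermediate range $[v_{0}, v_{1}]$ I would dispatch by directly plotting both sides using (\ref{area}) and (\ref{volume}), exactly as in the figures of Section~3.

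The main obstacle is purely computational: one must choose $v_{0}$ in each of the three cases so that the asymptotic inequality at $v_{0}$ is cleanly verified while keeping the intermediate range of numerical verification tractable. The weaker constants $0.94$, $0.92$, $0.86$ (compared to the $0.99$ used in Lemma~\ref{S3}) reflect the authors' explicit decision, announced in the introduction, to absorb some slack into the factor $c$ in exchange for simpler numerics; this trade-off is acceptable because the resulting profile bounds feed only into the downstream Yamabe estimates used in \cite{Ammann4}, where a slightly suboptimal constant is tolerable.
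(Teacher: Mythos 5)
Your proposal is correct and follows essentially the same three-step scheme as the paper's own proof: Bayle monotonicity plus a numerical check at a single small volume $\alpha_n$ to handle $v\le \alpha_n$, numerical (graphical) verification on the intermediate range (with the paper additionally noting that the concavity of $I_{(S^n\times\re)}$ lets one bound $\beta_n I_{(S^{n+1},2^{2/n}g_0^{n+1})}$ by chords joining sampled points), and the observation that the choice $\mu = 2^{2/n}$ makes the maximum of the scaled sphere profile equal the eventual constant value $2\,\mathrm{Vol}(S^n)$ of the cylinder profile, which dispatches large volumes since $\beta_n<1$. The paper's proof supplies the explicit thresholds $\alpha_7=0.0052$, $\alpha_8=0.0068$, $\alpha_9=0.0018$ and the corresponding numerical values in a table, but otherwise your argument is the same.
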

 \begin{proof}

We first use formulas (\ref{area}) and (\ref{volume}), and direct computation,  to find  some $\alpha_n>0$ (for $n=7,8,9$) such that $\frac{I_{(S^n \times \re,g_0^n+dx^2)}(\alpha_n)}{(\alpha_n)^{n/(n+1)}}  >(\beta_n) \gamma_{n+1}
 =(\beta_n) \lim_{v\rightarrow 0} \frac{I_{(S^{n+1}, 2^{2/n} g_0^{n+1})}(v)}{v^{n/(n+1)}}$ (where $\beta_7=0.94$, $\beta_8=0.92$ and $\beta_9=0.86$). The values of these $\alpha_n$ are included in the following table.

\begin{center}
  \begin{tabular}{|c| c | c| c| c |}
    \hline
   $n$& $\alpha_n$ &$\frac{I_{(S^n \times \re,g_0^n+dx^2)}(\alpha_n)}{(\alpha_n)^{n/(n+1)}}$& $\beta_n \gamma_{n+1}$ &$\beta_n $  \\ \hline
   7 &0.0052  & 9.04  &   8.96&0.94  \\ \hline
   8 &0.0068  & 9.51 &9.45    & 0.92 \\ \hline
   9 & 0.0018 & 9.49 & 9.44   & 0.86  \\ \hline

  \end{tabular}

\end{center}

\noindent Next, we use these values of $\alpha_n$ to prove the inequalities of the lemma for small values of $v$: we know by a theorem of V. Bayle \cite[page 52]{Bayle} 
that both $\frac{I_{(S^{n+1}, 2^{2/n} g_0^{n+1})}(v)}{v^{n/n+1}}$ and  $\frac{I_{(S^n \times \re,g_0^n+dx)}(v)}{v^{n/n+1}}$ are decreasing 
(since both  $(S^{n+1}, 2^{2/n}g_0^{n+1})$ and $(S^n\times \re,g_0^n+dx^2)$ 
have non-negative Ricci curvature). Then it follows that for  $0\leq v\leq \alpha_n$,
$$I_{(S^n \times \re,g_0^n+dx^2)}(v)  \geq  \frac{I_{(S^n \times \re,g_0^n+dx^2)}(\alpha_n)}{(\alpha_n)^{n/{n+1}}} v^{n/{n+1}} $$
$$>(\beta_n) \gamma_{n+1} v^{n/{n+1}} \geq \beta_n I_{(S^{n+1}, 2^{2/n} g_0^{n+1})}(v).$$

The inequality for $v\geq \alpha_n$, can be verified using standard numerical computations, based on formulas (\ref{area}) and (\ref{volume}). However, 
since $I_{(S^n \times \re,g_0^n+dx^2)}$ is concave (this follows also from  \cite[page 52]{Bayle}, as $(S^n\times \re,g_0^n+dx^2)$ has non-negative Ricci curvature) then it suffices to show 
that $\beta_n I_{(S^{n+1}, 2^{2/n} g_0^{n+1})}$ is bounded from above by the straight lines joining together points of $I_{(S^n \times \re,g_0^n+dx^2)}$.
We provide the graphics for each case (figures \ref{fig:7}, \ref{fig:8} and \ref{fig:9}). Note also that for each $n$, there is some $v_{0,n}$, such that for $v\ \geq v_{0,n}$ a cylindrical section $S^n \times [a_n,b_n] $ of volume $v$ is isoperimetric in $(S^n \times \re,g_0^n +dx^2)$ and its boundary
has volume $2 w_n >\beta_n \  2 w_n$ which is the maximum of  $\beta_n \ I_{(S^{n+1}, 2^{2/n} g_0^{n+1})}$. So one only needs to check the inequality for
$v\leq v_{0,n}$.

\begin{figure}[h!]
\begin{center}
                \subfigure[$ v \geq 1.9$]{
                
                                                  \includegraphics[scale=0.220]{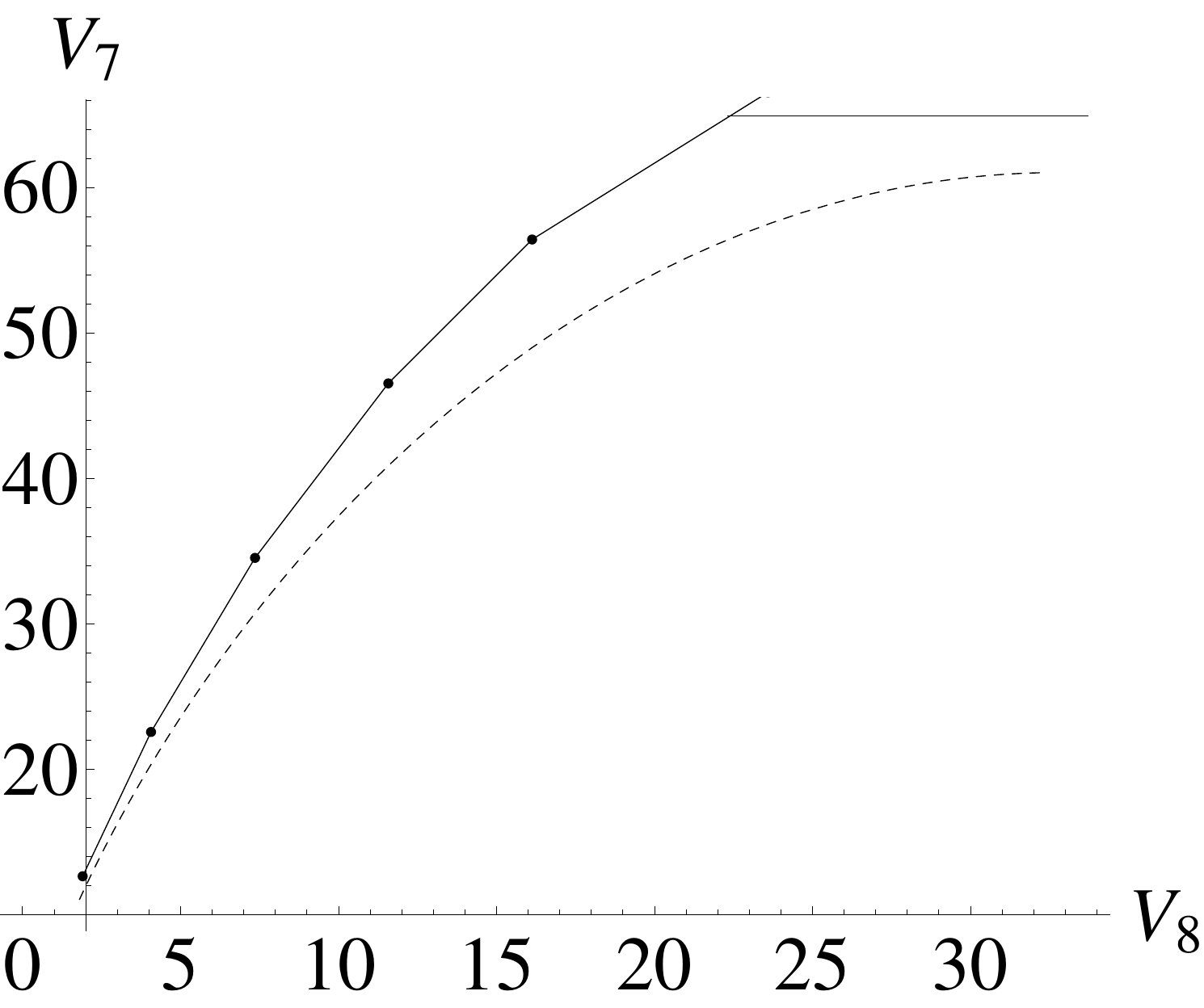}}
 \subfigure[$ 0.078 \leq v \leq 1.9$.]{
                
                                                  \includegraphics[scale=0.200]{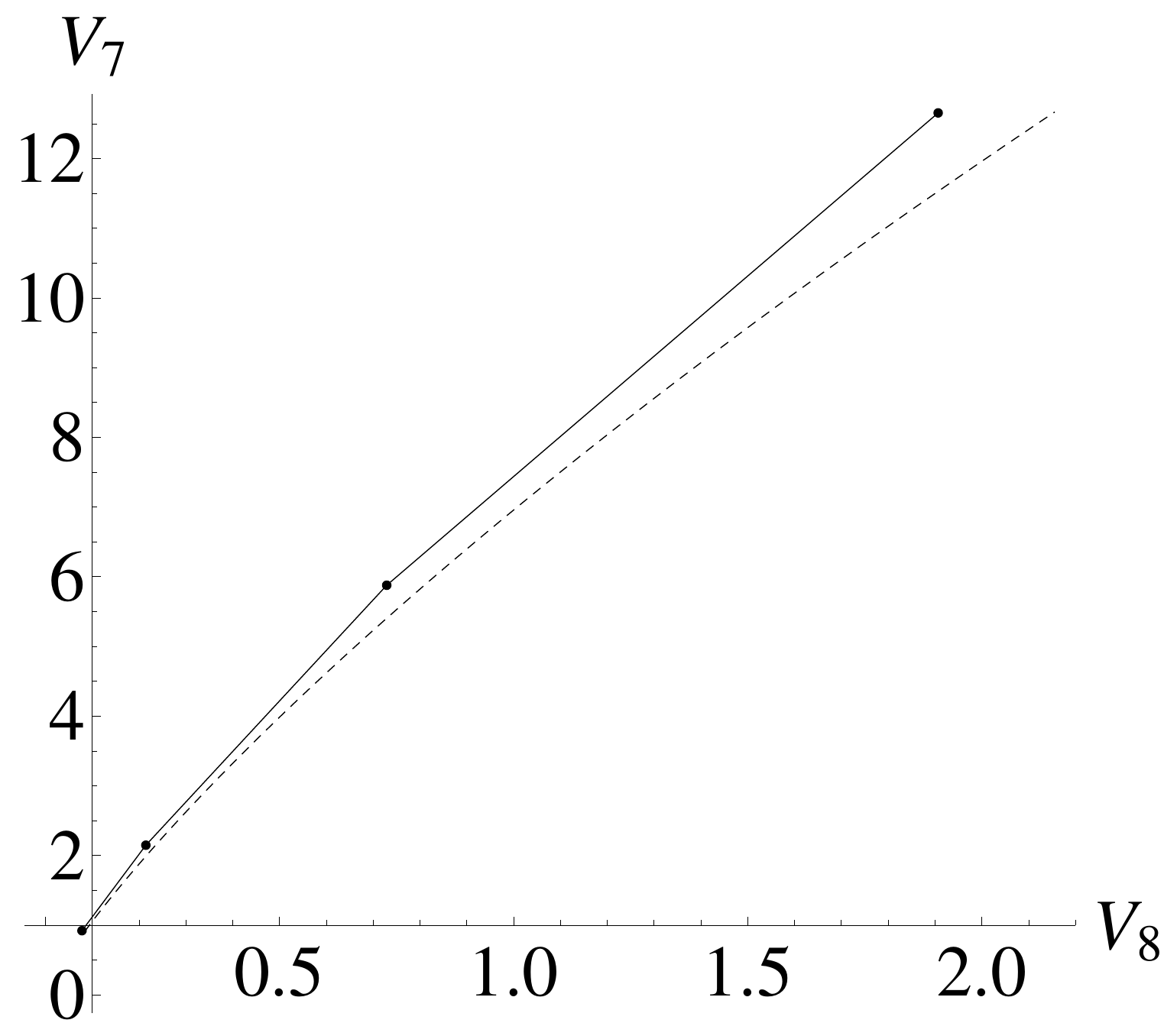}}                 
\subfigure[$0.005 \leq v \leq 0.078$.]{
                
                                                  \includegraphics[scale=0.220]{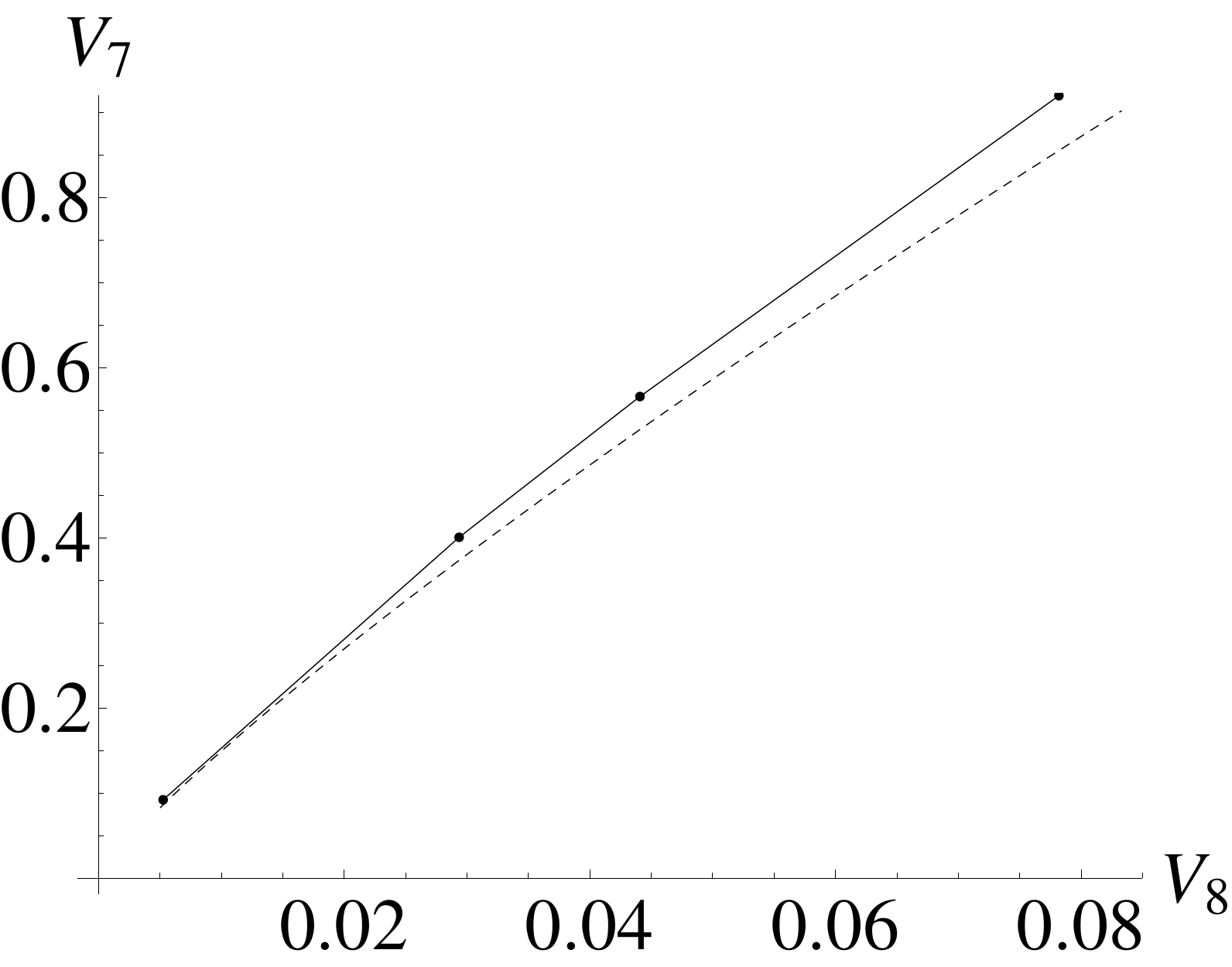}}
                                                  
                                                  \end{center}
\caption{$I_{(S^7 \times \re,g_0^7+dt^2)}(v)\geq 0.94 \ \ I_{(S^8, 2^{2/7} g_0^8)}(v)$, for $v\geq 0.005$.}
\label{fig:7}
\end{figure}

\begin{figure}[h!]
\subfigure[$  v \geq 0.591$.]{
                        
                                                  \includegraphics[scale=0.235]{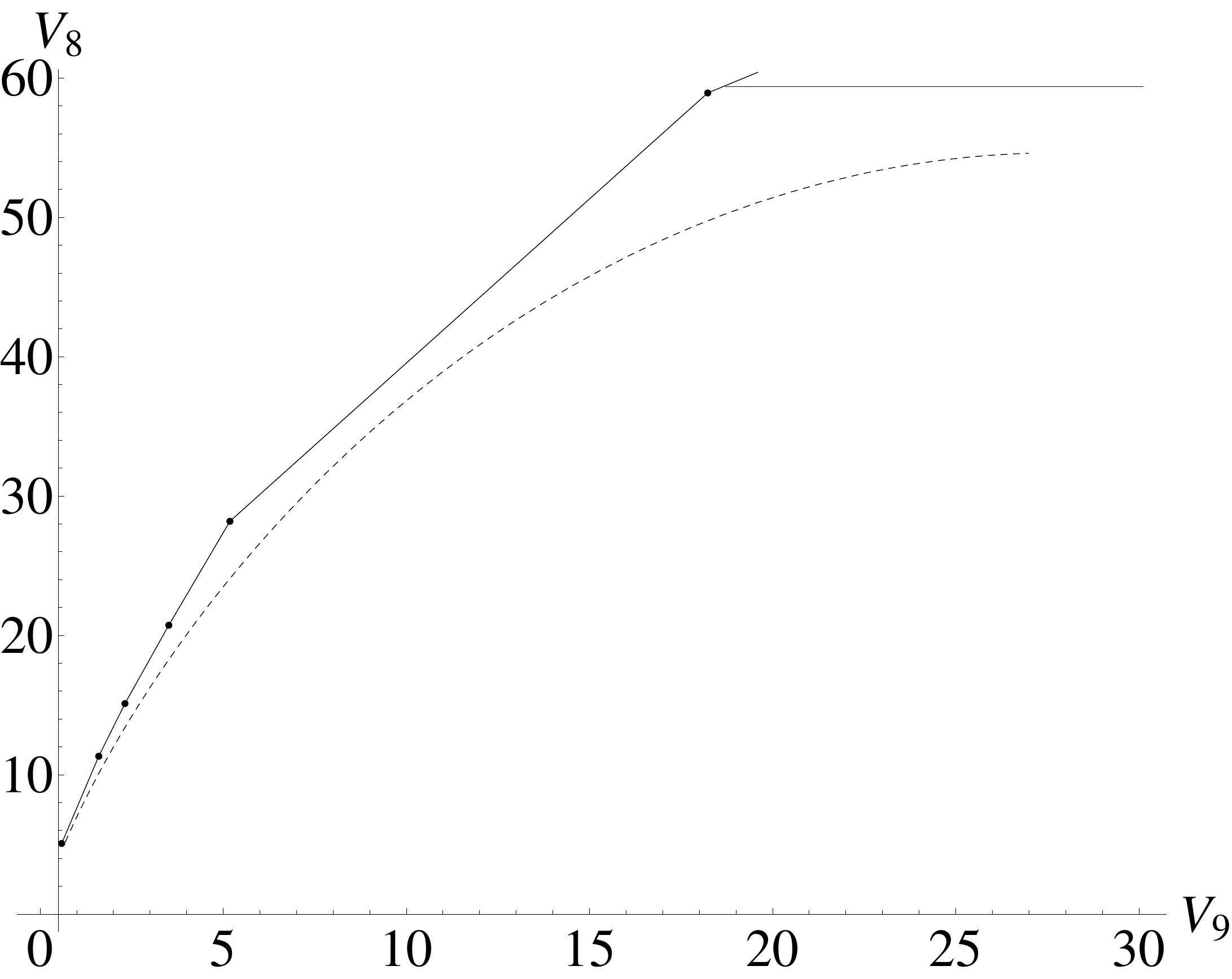}}
\subfigure[$ 0.0068 \leq v \leq 0.591$.]{
                        
                                                  \includegraphics[scale=0.250]{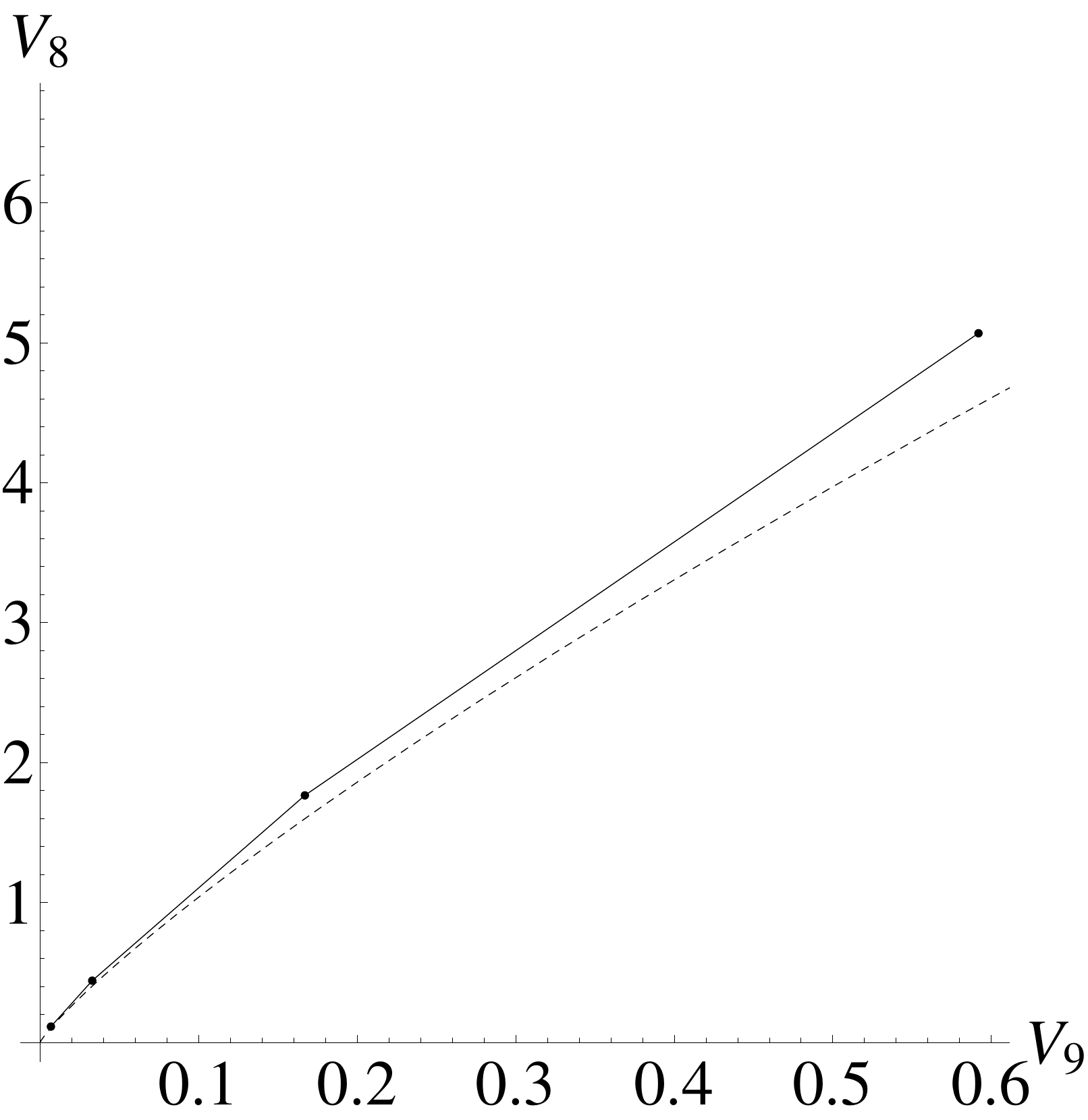}}
\caption{$I_{(S^8 \times \re,g_0^8+dt^2)}(v)\geq 0.92 \ \ I_{(S^9, 2^{1/4} g_0^9)}(v)$, for $v\geq 0.0068$.}
\label{fig:8}
\end{figure}

\begin{figure}[h!]
\subfigure[$ v \geq 0.028$.]{
                        
                                                  \includegraphics[scale=0.200]{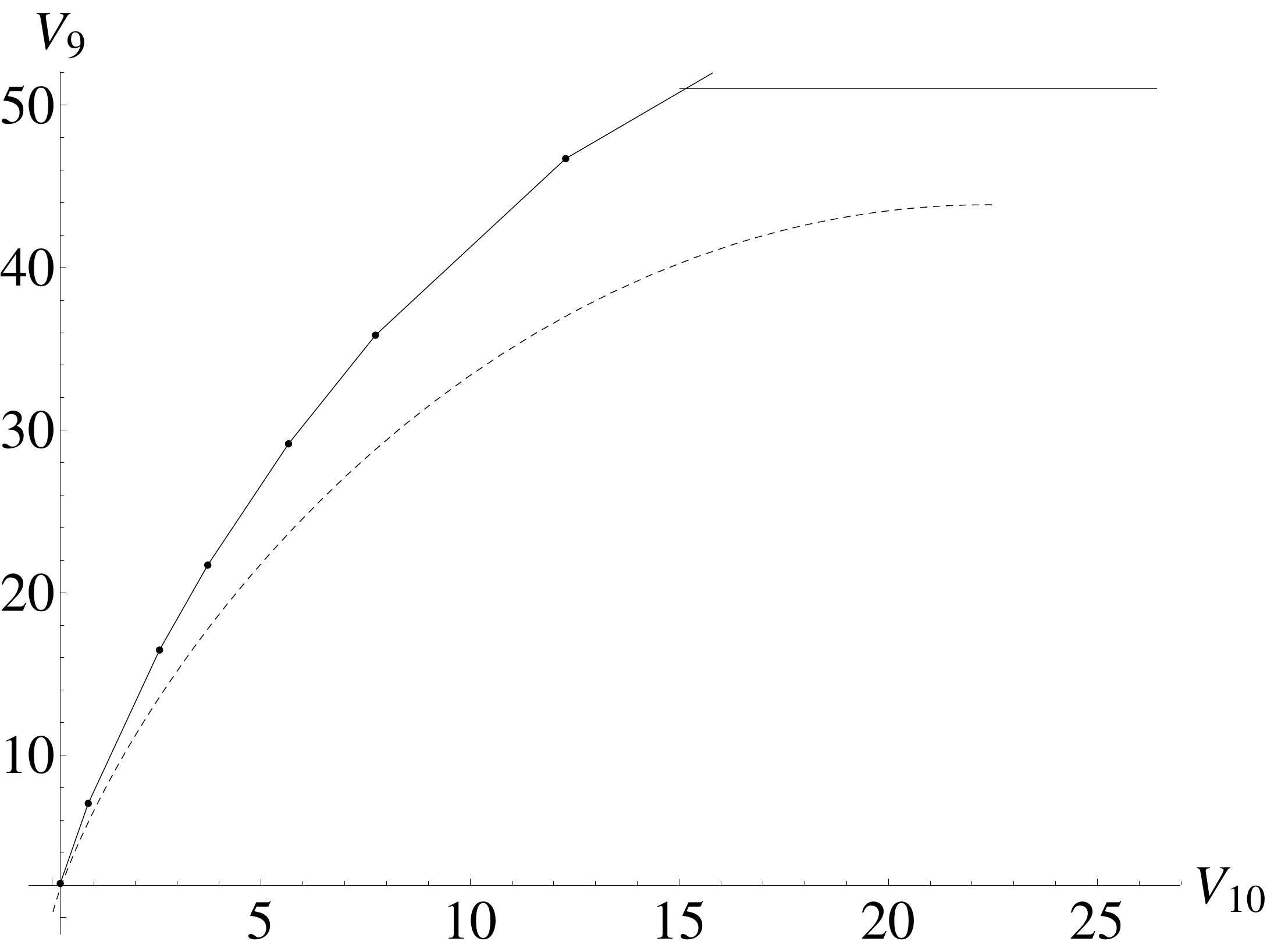}}
\subfigure[$ 0.0018 \leq v \leq 0.028$.]{
                        
                                                  \includegraphics[scale=0.200]{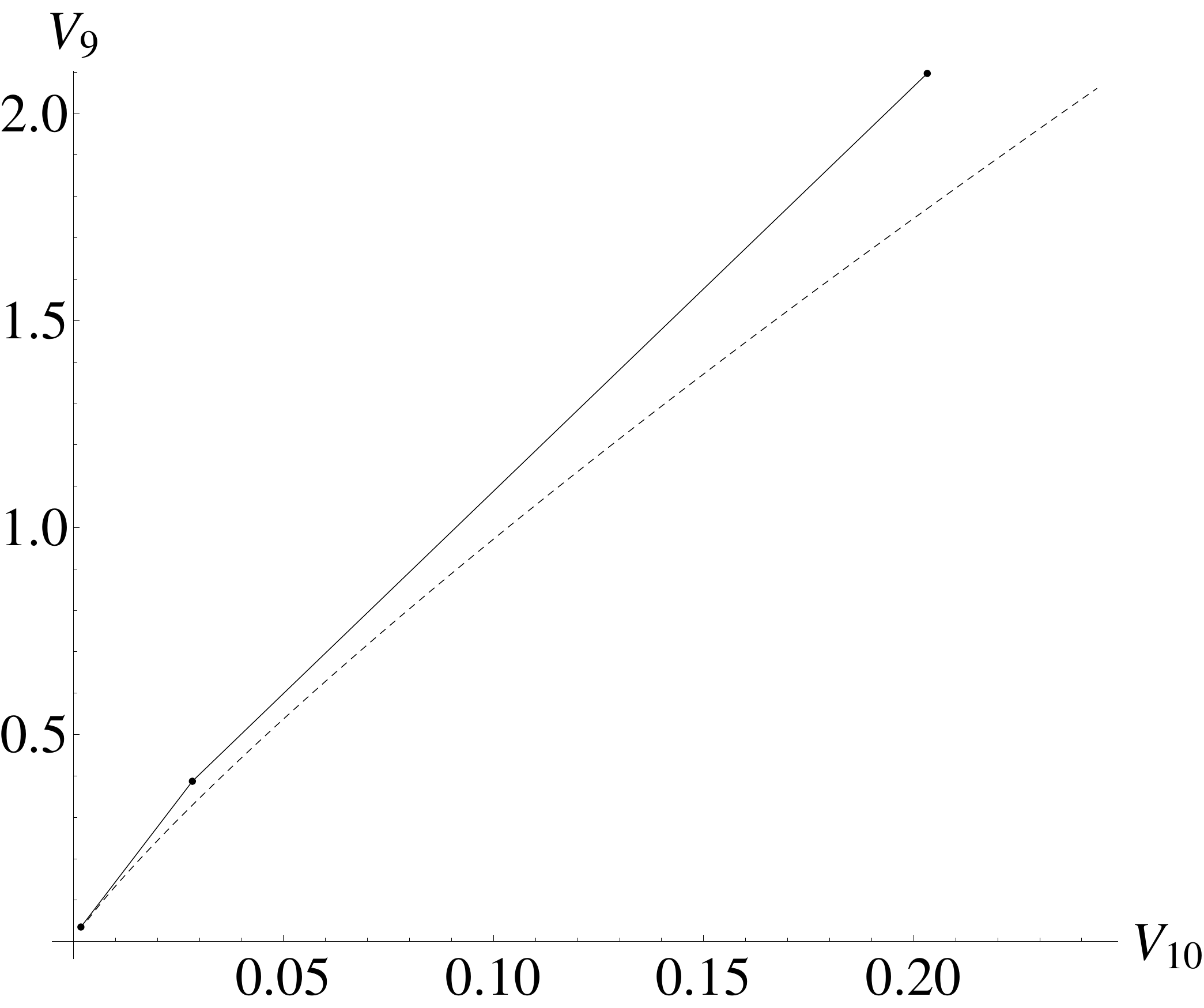}}
                        

\caption{$I_{(S^9 \times \re,g_0^9+dt^2)}(v)\geq 0.86 \ \ I_{(S^{10}, 2^{2/9} g_0^{10})}(v)$, for $v\geq 0.0018$.}
\label{fig:9}
\end{figure}

\end{proof}

\begin{Corollary}
 For $n=7$ and $n=8$,
 $I_{(S^n \times \re^2 ,g_0^n +dx^2)} \geq  \beta_n \beta_{n+1}\ I_{(S^{n+2},( 2^{2/n})(2^{2/(n+1)})( g_0^{n+2} ))}$. 
\end{Corollary}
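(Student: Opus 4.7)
The argument mirrors that of Corollary 3.2, iterated once more. From Lemma 5.1, for $n = 7, 8$, we have the two inputs
$$I_{(S^n \times \re, g_0^n + dx^2)} \geq \beta_n\, I_{(S^{n+1}, 2^{2/n} g_0^{n+1})}$$
and
$$I_{(S^{n+1} \times \re, g_0^{n+1} + dx^2)} \geq \beta_{n+1}\, I_{(S^{n+2}, 2^{2/(n+1)} g_0^{n+2})}.$$

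\textbf{Step 1.} I would apply the Ros product theorem to the first inequality, taking a further product with $\re$ exactly as in Corollary 3.2, and use that rescaling an $\re$ factor is an isometry, to obtain
$$I_{(S^n \times \re^2, g_0^n + dx^2)} \geq \beta_n\, I_{(S^{n+1} \times \re, 2^{2/n} g_0^{n+1} + dx^2)} = \beta_n\, I_{(S^{n+1} \times \re, 2^{2/n}(g_0^{n+1} + dx^2))}.$$

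\textbf{Step 2.} The space $(S^{n+1} \times \re, 2^{2/n}(g_0^{n+1} + dx^2))$ is a homothety of $(S^{n+1} \times \re, g_0^{n+1} + dx^2)$ by the factor $c = 2^{1/n}$. Isoperimetric profiles scale homogeneously: if $g' = c^2 g$ on a $d$-dimensional manifold then $I_{(M,g')}(v) = c^{d-1} I_{(M,g)}(v/c^d)$. Evaluating the second Lemma 5.1 inequality at $v/c^{n+2}$ (with $d = n+2$) and multiplying through by $c^{n+1}$, both sides transform in the same way, and the sphere $(S^{n+2}, 2^{2/(n+1)} g_0^{n+2})$ rescales to $(S^{n+2}, (2^{2/n})(2^{2/(n+1)}) g_0^{n+2})$. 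This yields
$$I_{(S^{n+1} \times \re, 2^{2/n}(g_0^{n+1} + dx^2))} \geq \beta_{n+1}\, I_{(S^{n+2}, (2^{2/n})(2^{2/(n+1)}) g_0^{n+2})}.$$

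\textbf{Step 3.} Concatenating Steps 1 and 2 gives the stated inequality. The only real content lies in Step 2, and this is pure bookkeeping of how isoperimetric profiles transform under a constant scaling; no estimate is actually being sharpened. I therefore expect no obstacle beyond carefully tracking the exponents of $c = 2^{1/n}$ on both sides of the rescaled inequality.
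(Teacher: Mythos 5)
Your proof is correct and follows essentially the same route as the paper's: Lemma 5.1 plus the Ros product theorem to handle the extra $\re$ factor (as in Corollary 3.2), followed by the homothety-scaling identity for isoperimetric profiles to collect the $2^{2/n}$ factor onto the sphere metric. Your Step 2 merely spells out the scaling computation that the paper leaves implicit.
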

\begin{proof}
The previous lemma tells us that for  $n=7$ and $n=8$
$I_{(S^n \times \re ,g_0^n +dx^2)} \geq \beta_n \ I_{(S^{n+1} , 2^{2/n} g_0^{n+1} )}$. Then
the same argument as in the proof of Corollary 3.2 implies that 
$I_{(S^n \times \re^2 ,g_0^n +dx^2)} \geq \beta_n \ I_{(S^{n+1} \times \re  , 2^{2/n} g_0^{n+1}  + dx^2)} = 
 \beta_n \ I_{(S^{n+1} \times \re  , 2^{2/n} ( g_0^{n+1}  + dx^2 )) }$. From  the previous lemma it follows  that
$\ I_{(S^{n+1} \times \re  , 2^{2/n} ( g_0^{n+1}  + dx^2 )) } \geq \beta_{n+1}\ I_{(S^{n+2},( 2^{2/n})(2^{2/(n+1)})( g_0^{n+2} ))}$
and the corollary follows.
\end{proof}

Using the previous corollary and Theorem 1.1 we have

$$Y(S^7 \times \re^2 , g_0^7 + dx^2 ) \geq  \min \left\{ \frac{42 \times 2^{2/7 + 1/4} }{72} , {( \beta_7 \beta_8 )}^2 \right\}  Y(S^9)= \min \{ 0.845 , 0.747 \}  \ Y(S^9 ).$$

And

$$Y(S^8 \times \re^2 , g_0^8 + dx^2 ) \geq  \min \left\{ \frac{56 \times 2^{2/9 + 1/4} }{90} , {( \beta_8 \beta_9 )}^2 \right\}  Y(S^{10})
= \min \{ 0.863, 0.626 \} Y(S^{10}).$$

\section{Proof of Theorem 1.1}

\begin{proof}
This is a general version of what appears in \cite[Theorem 1.2]{Ruiz}. The proof
is essentially the same, we give the details for completeness. 
 
Let $f: M^k \times \re^n \rightarrow \re_{\geq 0}$ be any smooth compactly supported
function.  

First assume that  $Vol(\{ f>0 \} )\leq Vol(S^{n+k} , \mu g_0^{n+k}  ) $. Let $f_* : (S^{n+k} , \mu g_0^{n+k} )  \rightarrow
\re_{\geq 0}$ be the spherical symmetrization of $f$: $f_*$ is a radial (it depends only on the
distance to  some fixed point in $S^{n+k}$), non-increasing function on
the sphere such that for any $t>0$, $Vol(\{ f>t \} )=Vol(\{ f_* >t \} )$
(here the volume is measured with respect to the volume element of $\mu g_0^{n+k}$) . 
We want to compare the values of the (corresponding) Yamabe functional in $f$ and $f_*$.
It is immediate that for any $q>0$,  $||f||_q = ||f_* ||_q$ and we need to compare the $L^2$-norm 
of the gradients.

By the coarea formula 

$$ \int { \| \nabla f \| }^2  dvol(g +dx^2 )  = \int_0^{\infty} \left( \int_{f^{-1} (t)} \| \nabla f \| d\sigma_t  \right) dt ,$$

\noindent
where $d\sigma_t$ denotes the volume element of the induced metric on $f^{-1} (t) $.
And by H\"{o}lder's inequality

$$\int_0^{\infty} \left( \int_{f^{-1} (t)} \| \nabla f \| d\sigma_t  \right) dt  \geq \int_0^{\infty} (Vol(f^{-1} (t) ))^2 
{\left( \int_{f^{-1} (t) } {\| \nabla  f \|}^{-1} d\sigma_t \right) }^{-1} dt . $$

But, applying the coarea formula again,

$$ \int_{f^{-1} (t) } {\| \nabla  f \|}^{-1} d\sigma_t  = -\frac{d}{dt}(\{ f>t \} ) =-\frac{d}{dt}(Vol(\{ f_* >t \} ))=
 \int_{f_*^{-1} (t) } {\| \nabla  f_*  \|}^{-1} d\sigma_t .$$

Since  $f^{-1} (t) $ contains  the boundary of $\{ f>t \} $ and $Vol( \{ f>t \} ) =Vol(\{ f_* >t \} )$ (which is an isoperimetric
region in the sphere), it follows that   $ Vol(f^{-1} (t) ) \geq 
Vol(\partial (\{ f>t \} ) \geq  \lambda  Vol( f_*^{-1} (t) )$. Then using that $\| \nabla f_*  \|$ is constant along level surfaces of $f_*$
and the coarea formula

$$\int \| \nabla f \|^2  dvol(g +dx^2 ) \geq  \lambda^2  \int_0^{\infty} (Vol(f_*^{-1} (t) ))^2 
{\left( \int_{f_*^{-1} (t) } {\| \nabla  f_* \|}^{-1} d\sigma_t \right) }^{-1} dt $$

$$= \lambda^2  \int_0^{\infty} Vol(f_*^{-1} (t)) \| \nabla f_* \| dt =  \lambda^2  \int_0^{\infty} \left( \int_{f_*^{-1} (t)} \| \nabla f_*  \| d\sigma_t  \right) dt $$

$$= \lambda^2  \int \| \nabla f_*  \|^2 dvol(\mu g^{n+k}_0  ) .$$

\noindent
Finally we have

$$Y_{g + dx^2} (f) = \frac{a_{k+n}  \int_{M \times \re^n} {\| \nabla f \|}^2\  dvol(g +dx^2 ) + \int_{M \times \re^n}  s_g \  f^2 \ dvol(g +dx^2 ) }{ (\int_{M \times \re^n}
 f^{p_{k+n}} \ dvol(g +dx^2))^{2/p_{k+n}} }$$

$$\geq \frac{a_{k+n} \lambda^2 \int_{S^{k+n}} {\| \nabla f _* \| }^2  \ dvol(\mu g^{k+n}_0  ) + \int_{S^{k+n}} k(k-1) f_*^2 \ dvol(\mu g^{k+n}_0 ) }{ (\int_{S^{k+n}} f_*^{p_{k+n}} \ 
dvol(\mu g^{k+n}_0))^{2/p_{k+n}} } $$

$$\geq \min \left( \lambda^2 , \frac{\mu k(k-1)}{ (k+n)(k+n-1)}   \right)      \times  $$

$$  \frac{a_{k+n} \int_{S^{k+n}} {\| \nabla f _* \| }^2 \  dvol(\mu g^{k+n}_0 ) + \int_{S^{k+n}}   (k+n)(k+n-1)(1/\mu ) \    f_*^2 \ 
 dvol(\mu g^{k+n}_0 ) }{ (\int_{S^{k+n}}
 f_*^{p_{k+n} }\ dvol(\mu g^{k+n}_0 ))^{2/p_{k+n}} }  $$

$$= \min \left( \lambda^2 , \frac{\mu k(k-1)}{ (k+n)(k+n-1)}   \right)  \   Y_{\mu g^{k+n}_0} (f_* ) .$$

\vspace{.2cm}

Now assume that  $Vol(\{ f>0 \} ) >  Vol(S^{n+k} , \mu g_0^{n+k}  )$. Then let $t_0 = \max (f)$
and pick $t_0 > t_1 \geq t_2 > ...> t_N =0$ such that for $i=1,...,N-1$ we have
that $Vol(f^{-1} (t_i , t_{i-1} ) )= Vol(S^{n+k} , \mu g_0^{n+k}  )$ and 
$Vol(f^{-1} (0 , t_{N-1} )) \leq  Vol(S^{n+k} , \mu g_0^{n+k}  )$. 
We let $f_i$ be the restriction of $f$ to $f^{-1} (t_i , t_{i-1} )$ and 
${f_i}_* :(S^{n+k} , \mu g_0^{n+k}  ) \rightarrow [t_i ,t_{i-1} ]$ be its radial
symmetrization (as above). Since $I_{(M^k \times \re^n , g +dx^2 )}$ is  non-decreasing  
we can use essentially the same argument as before to obtain 

$$\int_{f^{-1} (t_i ,t_{i-1} )} \| \nabla f \|^2 dvol(g+dx^2 ) =\int_{f^{-1} (t_i ,t_{i-1} )} \| \nabla f_i  \|^2 dvol(g+dx^2 )$$

$$\geq \lambda^2  \int_{f^{-1} (t_i ,t_{i-1} )} \| \nabla {f_i}_*  \|^2 dvol(\mu g^{n+k}_0  ) $$

Finally,

$$Y_{g+dx^2} (f) = \frac{a_{k+n}  \int_{M \times \re^n} {\| \nabla f \|}^2\  dvol(g +dx^2 ) + \int_{M \times \re^n}  s_g \  f^2 \ dvol(g +dx^2 ) }{ (\int_{M \times \re^n}
 f^{p_{k+n}} \ dvol(g +dx^2))^{2/p_{k+n}} }$$

$$\geq \frac{ \Sigma_{i=1}^N \left( a_{k+n}  \lambda^2  \int_{S^{k+n}} {\| \nabla {f_i}_*  \|}^2\  dvol(\mu g_0^{k+n} ) + \int_{S^{k+n}}  k(k-1) \  {f_i}_*^2 \ dvol(\mu g_0^{k+n})
\right)  }{ (   \Sigma_{i=1}^N    \int_{S^{k+n} }
 {f_i}_*^{p_{k+n}} \ dvol(\mu g_0^{k+n} ))^{2/p_{k+n}} }$$

$$\geq \min \left(  \lambda^2 ,\frac{\mu k(k-1)}{(k+n)(k+n-1)} \right) \times $$
$$\frac{ \Sigma_{i=1}^N \left( a_{k+n}   \int_{S^{k+n}} {\| \nabla {f_i}_*  \|}^2\  dvol(\mu g_0^{k+n} ) + \int_{S^{k+n}}  
(k+n)(k+n-1)(1/\mu )  \  {f_i}_*^2 \ dvol(\mu g_0^{k+n})
\right)  }{ (   \Sigma_{i=1}^N    \int_{S^{k+n} }
 {f_i}_*^{p_{k+n}} \ dvol(\mu g_0^{k+n} ))^{2/p_{k+n}} }$$

$$\geq \min \left(  \lambda^2 ,\frac{\mu k(k-1)}{(k+n)(k+n-1)} \right) \ \frac{ \Sigma_{i=1}^N Y(S^{k+n} )  ( \int_{S^{k+n} }
 {f_i}_*^{p_{k+n}} \ dvol(\mu g_0^{k+n} ))^{2/p_{k+n}} }{(   \Sigma_{i=1}^N    \int_{S^{k+n} }
 {f_i}_*^{p_{k+n}} \ dvol(\mu g_0^{k+n} ))^{2/p_{k+n}} }$$

\noindent
(since $Y(S^{k+n} )$ is the Yamabe constant of $(S^{k+n} ,\mu g_0^{k+n} )$)

$$\geq \min \left(  \lambda^2 ,\frac{\mu k(k-1)}{(k+n)(k+n-1)} \right) \ Y(S^{k+n} ) $$

\noindent
(since $x^{2/p_{k+n}} + y^{2/p_{k+n}} \geq (x+y)^{2/p_{k+n}}$, $x,y \geq 0$). And this concludes the proof of the theorem.

\end{proof}

\vspace{.5cm}

\vspace{0.5cm}

\end{document}